\DeclareSymbolFontAlphabet{\amsmathbb}{AMSb}%
\newcommand{\IP}{\amsmathbb{P}}
\newcommand{\R}{\amsmathbb{R}}
\newcommand{\C}{\amsmathbb{C}}
\newcommand{\N}{\amsmathbb{N}}
\newcommand{\cB}{\mathcal{B}}
\newcommand{\cC}{\mathcal{C}}
\newcommand{\cD}{\mathcal{D}} 
\newcommand{\cE}{\mathcal{E}}
\newcommand{\cF}{\mathcal{F}}
\newcommand{\cG}{\mathcal{G}}
\newcommand{\cL}{\mathcal{L}}
\DeclareMathOperator{\E}{\amsmathbb{E}} %
\DeclareMathOperator{\Cov}{\mathsf{Cov}}
\DeclareMathOperator{\trace}{Tr}
\DeclareMathOperator*{\esssup}{ess\,sup}
\newcommand{\dd}{\,\mathrm{d}}
\newcommand{\dom}{\mathrm{dom}} %
\newcommand{\inpro}[3][{}]{ \langle #2 , #3 \rangle_{#1} }
\newcommand{\norm}[2]{\| #1 \|_{#2}}
\newcommand{\Bignorm}[2]{\Big\| #1 \Big\|_{#2}}
\newcommand\reallywidehat[1]{%
	\savestack{\tmpbox}{\stretchto{%
			\scaleto{%
				\scalerel*[\widthof{\ensuremath{#1}}]{\kern-.6pt\bigwedge\kern-.6pt}%
				{\rule[-\textheight/2]{1ex}{\textheight}}%
			}{\textheight}%
		}{0.5ex}}%
	\stackon[1pt]{#1}{\tmpbox}%
}
\newtheorem{lemma}{Lemma}[section]
\newtheorem{theorem}[lemma]{Theorem}
\newtheorem{corollary}[lemma]{Corollary}
\theoremstyle{remark}
\newtheorem{remark}[lemma]{Remark}
\theoremstyle{definition}
\newtheorem{example}[lemma]{Example}
\begin{document}
	\title[Regularity of integral operators]{Hilbert--Schmidt regularity of symmetric integral operators on bounded domains with applications to SPDE approximations}
	
	\author[M.~Kov\'acs]{Mih\'aly Kov\'acs} \address[Mih\'aly Kov\'acs]{\newline Faculty of Information Technology and Bionics
		\newline P\'azm\'any P\'eter Catholic University
		\newline H-1444 Budapest, P.O. Box 278, Hungary.
		\newline and
		\newline Department of Differential Equations, Faculty of Natural Sciences, Budapest University of Technology and Economics
		\newline M\H{u}egyetem rkp. 3.
		\newline H-1111 Budapest, Hungary
		\newline and
		\newline Department of Mathematical Sciences
		\newline Chalmers University of Technology \& University of Gothenburg
		\newline S--412 96 G\"oteborg, Sweden.} \email[]{kovacs.mihaly@itk.ppke.hu}
	
	\author[A.~Lang]{Annika Lang} \address[Annika Lang]{\newline Department of Mathematical Sciences
		\newline Chalmers University of Technology \& University of Gothenburg
		\newline S--412 96 G\"oteborg, Sweden.} \email[]{annika.lang@chalmers.se}
	
	\author[A.~Petersson]{Andreas Petersson} \address[Andreas Petersson]{\newline The Faculty of Mathematics and Natural Sciences
		\newline Department of Mathematics
		\newline Postboks 1053, Blindern
		\newline 0316 Oslo, Norway.} \email[]{andreep@math.uio.no}
	
	\thanks{M.\ Kov\'acs acknowledges the support of the Marsden Fund of the Royal Society of New Zealand	through grant. no. 18-UOO-143, the Swedish Research Council (VR) through project no.\ 2017-04274 and the NKFIH through grant numbers 131545 and TKP2021-NVA-02. The work of A.\ Lang was partially supported by the Swedish Research Council (VR) (project no.\ 2020-04170), by the Wallenberg AI, Autonomous Systems and Software Program (WASP) funded by the Knut and Alice Wallenberg Foundation, and by the Chalmers AI Research Centre (CHAIR). The work of A. Petersson was supported in part by the Research Council of Norway (RCN) through project no.\ 274410, the Swedish Research Council (VR) through reg.~no.~621-2014-3995 and the Knut and Alice Wallenberg foundation. We are grateful to Prof.\ Giulia Di Nunno and two anonymous reviewers whose comments helped improve the manuscript. 
	}
	
	\subjclass[2010]{60H15, 60H35, 47B10, 35B65, 46E22}
	\keywords{stochastic partial differential equations, integral operators, elliptic operators, reproducing kernel Hilbert spaces, Hilbert--Schmidt operators}
	
	\begin{abstract}
		Regularity estimates for an integral operator with a symmetric continuous kernel on a convex bounded domain are derived. The covariance of a mean-square continuous random field on the domain is an example of such an operator. The estimates are of the form of Hilbert--Schmidt norms of the integral operator and its square root, composed with fractional powers of an elliptic operator equipped with homogeneous boundary conditions of either Dirichlet or Neumann type. These types of estimates, which couple the regularity of the driving noise with the properties of the differential operator, have important implications for stochastic partial differential equations on bounded domains as well as their numerical approximations. The main tools used to derive the estimates are properties of reproducing kernel Hilbert spaces of functions on bounded domains along with Hilbert--Schmidt embeddings of Sobolev spaces. Both non-homogeneous and homogeneous kernels are considered. In the latter case, results in a general Schatten class norm are also provided. Important examples of homogeneous kernels covered by the results of the paper include the class of Mat\'ern kernels. 
	\end{abstract}
	
	\maketitle
	\section{Introduction}
	\label{sec:intro}
	
	A Gaussian random field on a bounded domain~$\cD$ is characterized by its mean and its covariance. Depending on the research community, the covariance is described by a covariance kernel~$q$ or a covariance operator~$Q$. More specifically, given a symmetric continuous covariance kernel $q \colon \bar{\cD} \times \bar{\cD} \to \R$, the corresponding covariance operator~$Q$ is positive semidefinite and self-adjoint on the Hilbert space~$L^2(\cD)$ and given by
	\begin{equation*}
	Q u (x) = \int_{\cD} q(x,y) u(y) \dd y, 
	\end{equation*}
	for $x \in \cD$, $u \in L^2(\cD)$. Our main goal in this paper is to, given the regularity of the kernel~$q$, derive regularity estimates for~$Q$ in terms of certain smoothness spaces related to elliptic operators with boundary conditions on $\cD$.
	
	Our motivation to analyze the coupling of these two formulations in detail comes from the theory and approximation of solutions to stochastic partial differential equations (SPDEs). While the analysis of these equations and their numerical approximations is mainly done in Hilbert spaces, e.g., certain fractional order spaces related to the differential operator in the equation, with a $Q$-Wiener process as driving noise, algorithms that generate this driving noise in practice are often based on the covariance kernel~$q$. The class of Mat\'ern kernels is a popular example in spatial statistics. Surprisingly, to the best of our knowledge, such results are not available in the literature.
	
	To be able to put our abstract results and their consequences in a more specific context, let us consider a linear stochastic reaction-diffusion equation with additive noise
	\begin{equation}
	\label{eq:intro-spde}
	\begin{dcases*}
	\frac{\partial X}{\partial t}(t,x) = \sum_{i,j = 1}^d \frac{\partial}{\partial x_j} \left(a_{i,j}(\cdot) \frac{\partial X}{\partial x_i}\right)(t,x) - c(x) X(t,x) + \frac{\partial W}{\partial t}(t,x), (t,x) \in (0,T] \times \cD, \\
	X(0,x) = X_0(x), x \in \cD,
	\end{dcases*}
	\end{equation}
	on a convex bounded domain $\cD \subset \R^d, d= 1,2,3$, with boundary $\partial \cD$. Here the functions $(a_{i,j})_{i,j=1}^d, c$ fulfill an ellipticity condition, $X_0$ is some smooth initial function and homogeneous boundary conditions of either Dirichlet or Neumann type are considered. The stochastic noise term $\partial W / \partial t$ is Gaussian, white in time and correlated by a symmetric continuous covariance kernel $q \colon \bar{\cD} \times \bar{\cD} \to \R$ in space. This can be seen as a simplified version of equations considered for the modeling of sea surface temperature and other geophysical spatio-temporal processes on some spatial domain $\cD$ \cite[Chapter~6]{LR17}. This equation is considered in the context of~\cite{DPZ14} as a stochastic differential equation of It\^o type on the Hilbert space $H = L^2(\cD)$ of square integrable functions on $\cD$. The stochastic partial differential equation~\eqref{eq:intro-spde} is then written in the form
	\begin{equation}
	\label{eq:spde-intro-2}
	\dd X(t) + \Lambda X(t) = \dd W(t),
	\end{equation} 
	for $t \in (0,T]$. The unbounded linear operator $\Lambda$ on $H$ is densely defined, self-adjoint and positive definite with a compact inverse, see Section~\ref{sec:frac} for precise assumptions. The stochastic term $W$ is an $H$-valued $Q$-Wiener process on a complete filtered probability space $(\Omega, \cF, \IP)$. Here $Q$ is a positive semidefinite self-adjoint integral operator on $H$ with kernel $q$. If $x \mapsto W(1,x) = W(1)(x)$ is pointwise defined and jointly measurable with respect to the product $\sigma$-algebra $\cF \otimes \cB(\cD)$ (with $\cB(\cD)$ denoting the Borel $\sigma$-algebra on $\cD$) then $q$ is the covariance function of the random field $(W(1,x))_{x \in \cD}$. In general, there is no analytic solution to~\eqref{eq:spde-intro-2} so numerical approximations have to be computed. It is then vital to understand how various regularity properties of $q$ influence the behavior of $X$ and its approximation, since this can determine the convergence rate of the numerical approximations. We discuss this in concrete terms in Section~\ref{sec:spde}.
	
	The research field on SPDEs of the form~\eqref{eq:spde-intro-2} has been very active in the 21st century. There is a substantial body of literature, both from theoretical \cite{DPZ14,LR17} as well as numerical \cite{JK09,K14,LPS14} perspectives. For SPDEs on domains without boundary (i.e., when $\cD$ is replaced by Euclidean space, a torus or a sphere) the question of how regularity properties of $q$ influence $X$ is well understood, especially in the homogeneous case \cite{DF98, KZ00, LS15,PZ97}. This refers to the case that $q(x,y)$ only depends on the difference $x-y$ between two points $x$ and $y$ in $\cD$, examples including the class of Mat\'ern kernels, see Remark~\ref{rem:matern}. For domains with boundaries, such results are rarely found in the literature. Usually, the analysis is restricted to the special case that the eigenvalues and eigenfunctions of $Q$ are explicitly known. In particular it is common to consider the case that $Q$ and $\Lambda$ commute, see, e.g., \cite[Section~5.5.1]{DPZ14}. One of the few instances in which an author instead considers the properties of $q$ as a function on $\bar{\cD} \times \bar{\cD}$ when deriving connections between properties of $q$ and properties of $X$ can be found in~\cite{B05}. The main result of this paper is \cite[Theorem~4.2]{B05}, which states that $q$ has to satisfy the boundary conditions of $\Lambda$ in a certain sense in order for $Q$ and $\Lambda$ to commute. In practice, this excludes the physically relevant case of homogeneous noise from approaches such as that of \cite[Section~5.5.1]{DPZ14}, see \cite[Corollary~4.9]{B05}.  Our approach to the problem consists instead of deriving sufficient conditions on $q$ for which the associated symmetric operator $Q$ fulfills estimates of the form 
	\begin{equation}
	\label{eq:tracecond}
	\norm{\Lambda^{\frac{r}{2}} Q \Lambda^{\frac{r}{2}}}{\cL_1(H)} = \norm{\Lambda^{\frac{r}{2}} Q^{\frac{1}{2}}}{\cL_2(H)}^2 < \infty
	\end{equation}
	and
	\begin{equation}
	\label{eq:Qreg}
	\norm{\Lambda^{\frac{r}{2}} Q \Lambda^{\frac{s}{2}}}{\cL_2(H)} < \infty
	\end{equation}
	for fractional powers $\Lambda^{r/2}$ of $\Lambda$ and suitable constants $r,s \ge 0$. By $\cL_1(H)$ and $\cL_2(H)$ we denote the spaces of trace-class and Hilbert--Schmidt operators, respectively. We consider both homogeneous and non-homogeneous kernels $q$. In the former case, we are able to deduce estimates of the form~\eqref{eq:Qreg} with the $\cL_2(H)$-norm replaced by the more general Schatten class $\cL_p(H)$-norm, $p \ge 1$. 
	
	In the setting of~\eqref{eq:spde-intro-2}, the condition on $Q$ in~\eqref{eq:tracecond} is for a given value of $r \ge 0$ equivalent to requiring that the $H$-valued random variable $W(t)$ takes values in the subspace $\dot{H}^r = \dom(\Lambda^{r/2})$ of the fractional Sobolev space $H^r = W^{r,2}(\cD)$ at all times $t \in [0,T]$. This is a commonly encountered assumption in the literature, particularly when analyzing numerical approximation schemes for SPDEs, see Section~\ref{sec:spde}. The condition also has implications for the qualitative behavior of $X$. It guarantees that $X$ takes values in $\dot{H}^{r+1} \subset H^{r+1}$ \cite[Proposition~6.18]{DPZ14} with sample paths continuous in $\dot{H}^{r+1-\epsilon} \subset H^{r+1-\epsilon}$ for arbitrary $\epsilon>0$ \cite[Theorem~5.15]{DPZ14}. In particular, if~\eqref{eq:tracecond} holds with $r > 1/2$, $X$ is a strong solution (in the PDE sense) to~\eqref{eq:spde-intro-2} as opposed to just a weak solution, cf.\ \cite[Theorem~5.40]{DPZ14}. This means that the process $X$ takes values in $\dom(\Lambda)$ and has the intuitive representation
	\begin{equation*}
	X(t) = W(s) - \int_0^t \Lambda X(s) \dd s
	\end{equation*}
	for $t \in (0,T]$. Moreover, if~\eqref{eq:tracecond} holds with $r > d/2-1$, a classical Sobolev embedding theorem (see, e.g., \cite[Section~8]{NPV12}) ensures that $X(t)$ takes values in the space $\cC^{0,\max(1,\sigma)}(\bar{\cD})$ of H\"older continuous functions on $\bar{\cD} = \cD \cup \partial \cD$ with exponent $\sigma \in (0,r+1-d/2]$ $\IP$-a.s.. The evaluation functional is continuous on this Banach space. It follows that for each $t>0$, $(X(t,x))_{x \in \bar{\cD}}$ is a smooth random field on $\bar{\cD}$ as opposed just an abstract random variable in a Hilbert space, c.f.\ \cite[Sections~7.4-7.5]{HE15}. In summary, there could be many reasons why one would like to know for which $r>0$ the estimate~\eqref{eq:tracecond} is satisfied for a given kernel $q$. In the case of non-homogeneous kernels, we consider H\"older conditions on the kernel $q$ when deriving the estimate~\eqref{eq:tracecond}. This is a natural choice given our proof technique, which is based on the fact that $Q^{1/2}(H)$, the image of the square root $Q^{1/2}$ of $Q$, coincides with the reproducing kernel Hilbert space of functions on $\bar{\cD}$ associated with the kernel $q$ \cite{W04}. For a similar reason, in the case of homogeneous kernels $q$, we consider a decay condition on the Fourier transform of $q$, which is used in practice when generating the driving noise with fast Fourier transforms (cf.~\cite{LP11}). While this interpretation of $Q^{1/2}(H)$ as a reproducing kernel Hilbert space is well-known, we are not aware that it has been used to find estimates of the form~\eqref{eq:tracecond} before. 
	
	The estimate~\eqref{eq:Qreg} does not, as far as we know, have an immediate interpretation in terms of regularity properties of $X$ or $W$. It is, however, important for analyzing weak errors of approximations to certain SPDEs, such as the stochastic wave equation. It has also been used in the recent work~\cite{KLP21a} to derive higher convergence rates for approximations of the covariance operator of SPDE solutions. There is an immediate connection between the condition on $Q$ in~\eqref{eq:Qreg} and regularity of $q$: the condition is true with $r=s$ if and only if $q$ is an element of the Hilbert  tensor product space $\dot{H}^r \otimes \dot{H}^r$, see~\cite{CK20,T67}. Instead of exploiting this connection, we consider H\"older or Fourier transform conditions on $q$ also in this case. The reason for this is partly that these conditions are easier to check in applications compared to the rather abstract tensor product condition. We also want to ensure easy comparisons between the estimates~\eqref{eq:tracecond} and~\eqref{eq:Qreg} under the same conditions on $q$.  
	
	The outline of the paper is as follows. The next section contains an introduction to the necessary mathematical background along with our assumptions on $\Lambda$ and $Q$. This includes short introductions to fractional powers of elliptic operators on bounded domains and reproducing kernel Hilbert spaces along with the proofs of some preliminary lemmas. We derive the estimates~\eqref{eq:tracecond} and~\eqref{eq:Qreg} under H\"older conditions on a non-homogeneous kernel $q$ in Section~\ref{sec:non-hom}. In Section~\ref{sec:hom} we consider a decay condition on the Fourier transform of a homogeneous kernel $q$ and derive estimates on \begin{equation*}
	\norm{\Lambda^{\frac{r}{2}} Q \Lambda^{\frac{s}{2}}}{\cL_p(H)} < \infty
	\end{equation*}
	for $p \ge 1$, a more general form of the estimate~\eqref{eq:Qreg}. We use this estimate to obtain conditions for which the estimate~\eqref{eq:tracecond} is satisfied. Section~\ref{sec:spde} concludes the paper with a discussion of the implication of our results for the numerical analysis of SPDEs on domains with boundary. The applications we discuss are not limited to stochastic reaction-diffusion equations but include several other SPDEs involving elliptic operators on bounded domains, such as stochastic Volterra equations or stochastic wave equations.

	Throughout the paper, we adopt the notion of generic constants, i.e., the symbol $C$ is used to denote a positive and finite number which may vary from occurrence to occurrence and is independent of any parameter of interest. We use the expression $a \lesssim b$ to denote the existence of a generic constant $C$ such that $a \le C b$.
	
	\section{Preliminaries}
	
	In this section, we introduce our notation and reiterate some important results that we use in Sections~\ref{sec:non-hom}-\ref{sec:hom}. The material mainly comes from \cite[Chapter~1]{BT04}, \cite[Section~1.3-1.4]{G85}, \cite[Appendix~B]{K14} and~\cite[Chapters~1-2]{Y10}. We give explicit references for vital or nonstandard results.
	
	\subsection{Schatten class operators}
	
	Let $H$ and $U$ be real separable Hilbert spaces. By $\cL(H,U)$ we denote the space of linear and bounded operators from $H$ to $U$ and by $\cL_p(H,U)$ the subspace of Schatten class operators of order $p \in [1,\infty)$. This is a separable Banach space of compact operators with norm characterized by 
	\begin{equation}
	\label{eq:singval-characterization}
	\norm{\Gamma}{\cL_{p}(H,U)} = \left(\sum_{j=1}^\infty (\lambda_j(\Gamma))^p\right)^{\frac{1}{p}}.
	\end{equation} 
	Here $(\lambda_j(\Gamma))_{j = 1}^\infty$ are the singular values of $\Gamma \in \cL_{p}(H,U)$, i.e., the square roots of the eigenvalues of $\Gamma^*\Gamma$, which form a non-increasing sequence with limit $0$. The special cases $\cL_1(H,U)$ and $\cL_2(H,U)$ are referred to as the trace-class and Hilbert--Schmidt operators, respectively. 
	
	For an additional real separable Hilbert space $V$, let $\Gamma_1 \in \cL(U,V)$ and $\Gamma_2 \in \cL(V,H)$ be compact operators. Then, for any $j,k \ge 0$, 
	\begin{equation*}
	\lambda_{j + k + 1}(\Gamma_1 \Gamma_2) \le \lambda_{j+1}(\Gamma_1) \lambda_{k+1}(\Gamma_2).
	\end{equation*}
	This inequality is proven in \cite[Theorem~2]{F51} for the case that $U = V = H$, but the proof is readily adapted to our situation using the fact that the eigenvalues of $\Gamma^* \Gamma$ and $\Gamma \Gamma^*$, for a given operator $\Gamma$ between Hilbert spaces, coincide \cite[Section~4.3]{HE15}. Using the H\"older inequality for sequence spaces $\ell^p$, $p \ge 1$, it follows that for $p, q, r \in [1,\infty)$ with $1/r = 1/p + 1/q$, if $\Gamma_1 \in \cL_q(V,H)$ and $\Gamma_2 \in \cL_p(U,V)$, then $\Gamma_1 \Gamma_2 \in \cL_r(U,H)$ and 
	\begin{equation}
	\label{eq:schatten-holder}
	\norm{\Gamma_1 \Gamma_2}{\cL_r(U,H)} \le 2^{1/r} \norm{\Gamma_1}{\cL_q(V,H)} \norm{\Gamma_2}{\cL_p(U,V)} .
	\end{equation}
	Moreover, if $E$ is an additional real separable Hilbert space, $\Gamma_1 \in \cL(H,E)$, $\Gamma_3 \in \cL(U,V)$ (not necessarily compact) and $\Gamma_2 \in \cL_p(V,H)$ for some $p \in [1,\infty)$, then $\Gamma_1 \Gamma_2 \Gamma_3 \in \cL_p(U,E)$ with 
	\begin{equation}
	\label{eq:schatten-ideal}
	\norm{\Gamma_1 \Gamma_2 \Gamma_3}{\cL_p(U,E)} \le  \norm{\Gamma_1}{\cL(H,E)} \norm{\Gamma_2}{\cL_p(V,H)} \norm{\Gamma_3}{\cL(U,V)}.
	\end{equation}
	This ideal property follows from the fact that
	\begin{equation*}
	\lambda_{j}(\Gamma_1 \Gamma_2 \Gamma_3) \le \norm{\Gamma_1}{\cL(H,E)} \lambda_{j}(\Gamma_2 \Gamma_3) \le \norm{\Gamma_1}{\cL(H,E)} \lambda_{j}(\Gamma_2) \norm{\Gamma_3}{\cL(U,V)},
	\end{equation*}
	where the inequalities are consequences of the min-max theorem.
	
	The space $\cL_2(H,U)$ is a separable Hilbert space with inner product
	\begin{equation*}
	\inpro[\cL_2(H,U)]{\Gamma_1}{\Gamma_2} = \sum_{j = 1}^{\infty} \inpro[U]{\Gamma_1 e_j}{\Gamma_2 e_j} = \trace(\Gamma_2^* \Gamma_1) = \trace(\Gamma_1^* \Gamma_2),
	\end{equation*}
	for $\Gamma_1, \Gamma_2 \in \cL_2(H,U)$ and an arbitrary orthonormal basis $(e_j)_{j = 1}^\infty$ of $H$. For a positive semidefinite symmetric operator $Q \in \cL(H)$, $\norm{Q}{\cL_1(H)} = \trace(Q)$.  
	
	\subsection{Fractional powers of elliptic operators on bounded domains}
	\label{sec:frac}
	
	Consider a Gelfand triple $V \subset H \subset V^*$ of real separable Hilbert spaces with dense and continuous embeddings. We assume that $(V^*,V)$ forms an adjoint pair with duality product ${}_V \langle \cdot, \cdot \rangle_{V^*}$ such that ${}_{V^*} \langle u, v \rangle_V = \inpro[H]{u}{v}$ for all $u \in H$, $v \in V$. Given $V$ and $H$, $V^*$ can be constructed by identifying $H$ with its continuous dual space $H' \subset V'$ and letting $V^*$ be the completion of $H$ under the norm of $V'$. Next, let $\lambda \colon V \times V \to \R$ be a continuous, symmetric and coercive bilinear form, i.e., it is linear in both arguments, $\lambda(u,v) = \lambda(v,u)$ and there are constants $C, c > 0$ such that $|\lambda(u,v)| \le C \norm{u}{V} \norm{v}{V}$ and $\lambda(u,u) \ge c \norm{u}{V}^2$ for all $u,v \in V$. Then, there exists a unique isomorphism $L \colon V \to V^*$ such that ${}_{V^*} \langle L u, v \rangle_V = \lambda(u,v)$ for all $u, v \in V$. Viewing $L$ as an operator on $V^*$, it is densely defined and closed. We write $\Lambda = L|_{H}$ for its part in $H$, which is then a densely defined, closed, self-adjoint and positive definite operator with domain $D(\Lambda) = \{v \in V : \Lambda v = L v \in H\}$. It has a self-adjoint inverse $\Lambda^{-1} \in \cL(H)$, which we assume to be compact. 
	
	Applying the spectral theorem to $\Lambda^{-1}$, we obtain the existence of a sequence $(\lambda_j)_{j=1}^\infty$ of positive non-decreasing eigenvalues of $\Lambda$, along with an accompanying orthonormal basis of eigenfunctions $(e_j)_{j=1}^\infty$ in $H$. For $r \ge 0$, we define fractional powers of $\Lambda$ by 
	\begin{equation}
	\label{eq:fracpower}
	\Lambda^{\frac{r}{2}} v = \sum_{j = 0}^{\infty} \lambda_j^{\frac{r}{2}} \inpro[H]{v}{e_j} e_j
	\end{equation}
	for 
	\begin{equation*}
	v \in \dom(\Lambda^{\frac{r}{2}}) = \left\{ v \in H : \norm{x}{\dot{H}^r}^2 = \sum^\infty_{j=1} \lambda_j^r \inpro[H]{v}{e_j}^2 < \infty\right\}.
	\end{equation*}
	We write $\dot{H}^r = \dom(\Lambda^{{r/2}})$. Note that $\dot{H}^0 = H$. For $r<0$, we let $\Lambda^{{r/2}}$ be defined by~\eqref{eq:fracpower},  and we write $\dot{H}^r$ for the completion of $H$ under the norm $\norm{x}{\dot{H}^r}$. Equivalently, we can write
	\begin{equation*}
	\dot{H}^r = \dom(\Lambda^{\frac{r}{2}}) = \left\{x = \sum_{j=1}^{\infty} x_j e_j : (x_j)_{j=1}^\infty \subset \R \text{ such that } \norm{x}{\dot{H}^r}^2 = \sum_{j=1}^{\infty} \lambda_j^{r} x_j^2 < \infty \right\}.
	\end{equation*} 
	Then $\Lambda^{{r/2}} \in \cL(\dot{H}^r,H)$ for all $r \in \R$. Regardless of the sign of $r$, $\dot{H}^r$ is a Hilbert space with inner product $\inpro[\dot{H}^r]{u}{v} = \inpro[H]{\Lambda^{r/2}u}{\Lambda^{r/2}v}$. For $r>0$, $\dot{H}^{-r}$ is isometrically isomorphic to $( \dot{H}^r)'$ \cite[Theorem~B.8]{K14}. In particular, for $v \in H = H'$, 
	\begin{equation*}
	\norm{v}{\dot{H}^{-r}} = \norm{\Lambda^{-\frac{r}{2}} v}{H} = \sup_{u \in \dot{H}^r} \frac{|\inpro[H]{u}{v}|}{\norm{u}{\dot{H}^{r}}}.
	\end{equation*}
	For $r > s$, we have $\dot{H}^r \hookrightarrow \dot{H}^s$ where the embedding is dense and continuous. Since $(\Lambda^{-{r/2}} e_j)_{j=1}^\infty$ and $(\Lambda^{-{s/2}} e_j)_{j=1}^\infty$ are orthonormal bases of $\dot{H}^r$ and $\dot{H}^s$, respectively, we may represent the embedding $I_{\dot{H}^r \hookrightarrow \dot{H}^s}$ by
	\begin{equation*}
	I_{\dot{H}^r \hookrightarrow \dot{H}^s} = \sum_{j=1}^{\infty} \inpro[\dot{H}^r]{\cdot}{\Lambda^{-\frac{r}{2}} e_j} \Lambda^{-\frac{r}{2}} e_j = \sum_{j=1}^{\infty} \lambda_j^{\frac{s-r}{2}} \inpro[\dot{H}^r]{\cdot}{\Lambda^{-\frac{r}{2}} e_j} \Lambda^{-\frac{s}{2}} e_j.
	\end{equation*}
	It follows that $I_{\dot{H}^r \hookrightarrow \dot{H}^s}$ is compact. Lemma~2.1 in~\cite{BKK20} allows us to, for all $r, s \in \R$, extend $\Lambda^{{s/2}}$ to an operator in $\cL(\dot{H}^r,\dot{H}^{r-s})$, and we will do so without changing notation. 
	
	Let us write $\dot{H}^s_{\C}$ for the complexification of $\dot{H}^s$, i.e., the complex Hilbert space given by $\dot{H}^s \times \dot{H}^s$ with scalar multiplication $(a+bi)(u,v) = (au-bv,av+bu)$ for $a+bi \in \C$ and $(u,v) \in \dot{H}^s_{\C}$. We write $u + iv$ for $(u,v)$ so that we may consider $(\Lambda^{-s/2} e_j)_{j=1}^\infty$ as an orthonormal basis of $\dot{H}^s_{\C}$ when this is equipped with the inner product 
	\begin{equation*}
	\inpro[\dot{H}^s_{\C}]{u + iv}{x + iy} = \inpro[\dot{H}^s]{u}{x} + i \inpro[\dot{H}^s]{v}{x} - i \inpro[\dot{H}^s]{u}{y}+ \inpro[\dot{H}^s]{v}{y}.
	\end{equation*}
	For $r\ge s \in \R$, it follows that
	\begin{equation*}
	\dot{H}^r_{\C} = \left\{ v \in \dot{H}^s_{\C} : \sum^\infty_{j=1} \lambda_j^{r-s} |\inpro[\dot{H}^s_{\C}]{v}{\Lambda^{-s/2} e_j}|^2 < \infty\right\}.
	\end{equation*}
	From this, one obtains (see, e.g., \cite{CG94}) that for $\theta \in [0,1]$, $\dot{H}_\C^{\theta r + (1-\theta) s} = [\dot{H}_\C^{s},\dot{H}_\C^{r}]_\theta$ in the sense of isometric isomorphisms, where $[\dot{H}_\C^{s},\dot{H}_\C^{r}]_\theta$ is defined by complex interpolation. We refer to $u$ and $v$ as the real and imaginary parts of $u + iv \in \dot{H}^s_{\C}$ and write $\overline{u + iv}$ for $u - iv$. The real part of $\dot{H}^s_{\C}$ with only real multiplication of scalars is a Hilbert space isometrically isomorphic to $\dot{H}^s$.
	
	The interpolation space representation of $\dot{H}_\C^{\theta r + (1-\theta) s}$ allows us to prove the following lemma for a symmetric operator $Q \in \cL(H)$.
	\begin{lemma}
		\label{lem:symmetricschatten}
		Let $Q \in \cL(H)$ be symmetric. Then the following three claims are equivalent for $p \in [1,\infty)$ and $r \in \R$:
		\begin{enumerate}[label=(\roman*)]
			\item \label{lem:symmetricschatten:i} $\norm{Q}{\cL_p(H,\dot{H}^r)} = \norm{\Lambda^{{r/2}} Q}{\cL_p(H)} < \infty$.
			\item \label{lem:symmetricschatten:ii} $\norm{Q}{\cL_p(\dot{H}^{-r},H)} = \norm{Q \Lambda^{{r/2}}}{\cL_p(H)} < \infty$.
			\item \label{lem:symmetricschatten:iii} $\norm{Q}{\cL_p(\dot{H}^{(\theta-1) r},\dot{H}^{\theta r})} = \norm{\Lambda^{\theta r/2}Q \Lambda^{(1-\theta) r/2}}{\cL_p(H)} < \infty$ for all $\theta \in [0,1]$.
		\end{enumerate}
	\end{lemma}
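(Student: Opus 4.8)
The plan is to separate the two equalities appearing within each of the three statements, which are pure isometric bookkeeping, from the genuine content, which is the equivalence of the finiteness conditions and which I would extract by complex interpolation. For the bookkeeping I would record the single structural fact used throughout: for all $s,t\in\R$ the map $\Lambda^{s/2}$ is an isometric isomorphism of $\dot{H}^{t}$ onto $\dot{H}^{t-s}$ (immediate from $\norm{v}{\dot{H}^{t}}=\norm{\Lambda^{t/2}v}{H}$ together with the extension in \cite[Lemma~2.1]{BKK20}). Since the singular values of a compact operator are unaffected by pre- or post-composition with an isometric isomorphism — if $U$ is such a map then $(U\Gamma)^{*}(U\Gamma)=\Gamma^{*}\Gamma$, and symmetrically on the right — the characterization \eqref{eq:singval-characterization} shows that $\norm{\cdot}{\cL_i}$ is invariant under these compositions. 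Applying this with $\Lambda^{r/2}\colon\dot{H}^{r}\to H$ on the left gives the equality in \ref{lem:symmetricschatten:i}, with $\Lambda^{r/2}\colon H\to\dot{H}^{-r}$ on the right gives \ref{lem:symmetricschatten:ii}, and with $\Lambda^{\theta r/2}\colon\dot{H}^{\theta r}\to H$ together with $\Lambda^{(1-\theta)r/2}\colon H\to\dot{H}^{(\theta-1)r}$ gives \ref{lem:symmetricschatten:iii}; in each case the two sides are simultaneously finite, which also legitimises viewing $Q$ as an operator between the relevant spaces.

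Next I would dispatch the cheap equivalences. Because $Q$ is symmetric and $\Lambda^{r/2}$ self-adjoint, the bounded operator $\Lambda^{r/2}Q$ has adjoint $(\Lambda^{r/2}Q)^{*}=Q\Lambda^{r/2}$, and since $\norm{\Gamma}{\cL_i}=\norm{\Gamma^{*}}{\cL_i}$ (the nonzero eigenvalues of $\Gamma^{*}\Gamma$ and $\Gamma\Gamma^{*}$ agree), this yields \ref{lem:symmetricschatten:i}~$\Leftrightarrow$~\ref{lem:symmetricschatten:ii} with equal norms. The implication \ref{lem:symmetricschatten:iii}~$\Rightarrow$~\ref{lem:symmetricschatten:i},\ref{lem:symmetricschatten:ii} is then trivial, taking $\theta=1$ and $\theta=0$.

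The real work is the converse \ref{lem:symmetricschatten:i},\ref{lem:symmetricschatten:ii}~$\Rightarrow$~\ref{lem:symmetricschatten:iii}, and here I would invoke the interpolation scale. From $\dot{H}^{\theta\rho_{1}+(1-\theta)\rho_{0}}=[\dot{H}^{\rho_{0}},\dot{H}^{\rho_{1}}]_{\theta}$ one reads off $\dot{H}^{(\theta-1)r}=[\dot{H}^{-r},H]_{\theta}$ and $\dot{H}^{\theta r}=[H,\dot{H}^{r}]_{\theta}$. By \ref{lem:symmetricschatten:ii} and \ref{lem:symmetricschatten:i} the operator $Q$ lies in $\cL_i(\dot{H}^{-r},H)$ and in $\cL_i(H,\dot{H}^{r})$, i.e.\ in the Schatten ideal at the two endpoints of these couples; the complex interpolation theorem for Schatten ideals over Hilbert couples then places $Q$ in $\cL_i([\dot{H}^{-r},H]_{\theta},[H,\dot{H}^{r}]_{\theta})=\cL_i(\dot{H}^{(\theta-1)r},\dot{H}^{\theta r})$, which is \ref{lem:symmetricschatten:iii}. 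Concretely this is the Stein three-lines bound for the analytic family $T_{z}=\Lambda^{zr/2}Q\Lambda^{(1-z)r/2}$ on $0\le\Re z\le1$: since $T_{x+iy}=\Lambda^{iyr/2}T_{x}\Lambda^{-iyr/2}$ with $\Lambda^{\pm iyr/2}$ unitary, the $\cL_i$-norm on the two boundary lines equals $\norm{Q\Lambda^{r/2}}{\cL_i(H)}$ and $\norm{\Lambda^{r/2}Q}{\cL_i(H)}$, and pairing $\trace(T_{z}G)$ against $G$ in the unit ball of the dual ideal produces $\norm{T_{\theta}}{\cL_i(H)}\le\norm{Q\Lambda^{r/2}}{\cL_i(H)}^{1-\theta}\norm{\Lambda^{r/2}Q}{\cL_i(H)}^{\theta}$.

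I expect the interpolation step, specifically the trace-class case $i=1$, to be the only real obstacle. For $i=2$ it can be bypassed by a direct computation: writing $q_{jk}=\inpro[H]{Qe_{k}}{e_{j}}$ in the eigenbasis of $\Lambda$ one finds
\[
\norm{\Lambda^{\theta r/2}Q\Lambda^{(1-\theta)r/2}}{\cL_2(H)}^{2}=\sum_{j,k}\lambda_{j}^{\theta r}\lambda_{k}^{(1-\theta)r}q_{jk}^{2},
\]
and the weighted arithmetic--geometric mean inequality $\lambda_{j}^{\theta r}\lambda_{k}^{(1-\theta)r}\le\theta\lambda_{j}^{r}+(1-\theta)\lambda_{k}^{r}$ bounds the right-hand side by $\theta\norm{\Lambda^{r/2}Q}{\cL_2(H)}^{2}+(1-\theta)\norm{Q\Lambda^{r/2}}{\cL_2(H)}^{2}$, both terms finite by \ref{lem:symmetricschatten:i},\ref{lem:symmetricschatten:ii} (and in fact equal, by $q_{jk}=q_{kj}$). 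This entrywise argument has no counterpart for the trace norm, so for $i=1$ one is thrown back on the interpolation/three-lines route, whose one delicate point is verifying that $z\mapsto T_{z}$ is analytic and suitably bounded on the closed strip, so that the three-lines theorem genuinely applies.
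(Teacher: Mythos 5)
Your proposal is correct and takes essentially the same route as the paper: the paper likewise reduces the norm equalities to isometric bookkeeping with powers of $\Lambda$, obtains the equivalence of (i) and (ii) from an adjoint/singular-value symmetry argument (phrased there via the two adjoints $\hat Q^*$, $\check Q^*$ and the equality of the nonzero spectra of $Q\check Q^*$ and $\check Q^* Q$), and settles the hard implication to (iii) by complex interpolation on the couples $[\dot{H}^{-r},H]_\theta$ and $[H,\dot{H}^r]_\theta$, citing \cite{G74} (see also \cite{CG94}) for the Schatten-ideal interpolation that your Stein three-lines sketch would reprove by hand. Your only genuine addition is the elementary weighted AM--GM computation in the eigenbasis for the case $i=2$, which bypasses interpolation entirely for the Hilbert--Schmidt norm and is a nice self-contained alternative the paper does not include; for $i=1$ you correctly identify that one must fall back on the interpolation argument, exactly as the paper does.
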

	\begin{proof}
		Assume first that $r\ge 0$. If $Q \in \cL(H,\dot{H}^r)$, then by symmetry of $Q$, 
		\begin{equation*}
		\norm{Q v }{H} = \sup_{\substack{w \in H \\ \norm{w}{H}=1}}|\inpro[H]{Q v}{w}| = \sup_{\substack{w \in H \\ \norm{w}{H}=1}}|\inpro[H]{ \Lambda^{-\frac{r}{2}}v}{\Lambda^{\frac{r}{2}}Qw}| \le  \norm{v}{\dot{H}^{-r}} \norm{Q}{\cL(H,\dot{H}^r)}
		\end{equation*}
		so that $Q$ can be continuously extended to $\dot{H}^{-r}$. Conversely, if $Q$ extends to $\dot{H}^{-r}$, the operator norm $\norm{Q\Lambda^{r/2}}{\cL(H)}$ is finite and therefore 
		\begin{align*}
		\norm{Qv}{\dot{H}^r}^2 = \sum_{j = 1}^\infty \lambda_j^r \inpro[H]{Q v}{e_j}^2 = \sum_{j = 1}^\infty \inpro[H]{v}{Q \Lambda^{\frac{r}{2}} e_j}^2  = \sum_{j = 1}^\infty \inpro[H]{(Q \Lambda^{\frac{r}{2}})^*v}{e_j}^2 \le \norm{(Q \Lambda^{\frac{r}{2}})^*}{\cL(H)} \norm{v}{H}
		\end{align*}
		so that $Q \in \cL(H,\dot{H}^r)$.
		Let us now write $\hat Q^*$ 
		for the adjoint of $Q$ with respect to $\cL(H,\dot{H}^r)$, i.e., the operator in $\cL(\dot{H}^r,H)$ defined by
		\begin{equation*}
		\inpro[H]{\hat Q^* u}{v} = \inpro[\dot{H}^r]{u}{Q v} = \inpro[H]{\Lambda^{\frac{r}{2}}u}{\Lambda^{\frac{r}{2}}Q v}
		\end{equation*}
		for $u \in \dot{H}^r, v \in H$. Let us similarly write $\check Q^* \in \cL(H,\dot{H}^{-r})$ for the adjoint of $Q$ with respect to $\cL(\dot{H}^{-r},H)$. We have for $u \in H$, $v \in \dot H^r$ that 
		\begin{equation}
		\label{eq:lem:symmetricschatten:eq1}
		\inpro[H]{\Lambda^{-\frac{r}{2}} \check Q^* u}{v} = \inpro[\dot{H}^{-r}]{\check Q^* u}{\Lambda^{\frac{r}{2}}v} = \inpro[H]{u}{Q\Lambda^{\frac{r}{2}}v} = \inpro[H]{\Lambda^{\frac{r}{2}} Q u}{v}
		\end{equation}
		so that by density of $\dot{H}^r$ in $H$, $\Lambda^{-{r/2}} \check Q^* = \Lambda^{{r/2}} Q$. This implies  
		\begin{align*}
		\norm{(\hat Q^* Q - Q \check Q^* ) u}{H} &= \sup_{\substack{v \in H \\ \norm{v}{H}=1}} | \inpro[H]{\hat Q^* Q u }{v} - \inpro[H]{Q \check Q^* u }{v}| \\
		&= \sup_{\substack{v \in H \\ \norm{v}{H}=1}} | \inpro[H]{\Lambda^\frac{r}{2} Q u }{\Lambda^\frac{r}{2} Q v} - \inpro[H]{\Lambda^{-\frac{r}{2}} \check Q^* u }{\Lambda^\frac{r}{2} Q v}| = 0 
		\end{align*}
		for $u \in H$, hence $Q \check Q^* = \hat Q^* Q$. Since the eigenvalues of $Q \check Q^*$ are the same as those of $\check Q^* Q$ \cite[Section~4.3]{HE15}, the singular values of $Q \in \cL(H,\dot{H}^r)$ and $Q \in \cL(\dot{H}^{-r},H)$ are the same, so that~\eqref{eq:singval-characterization} implies the equivalence of \ref{lem:symmetricschatten:i} and \ref{lem:symmetricschatten:ii} for $p \in [1,\infty)$. For $r<0$ the proof is the same, except that we take $v\in H$ in~\eqref{eq:lem:symmetricschatten:eq1}. 
		
		Clearly \ref{lem:symmetricschatten:iii} implies \ref{lem:symmetricschatten:ii} and \ref{lem:symmetricschatten:i}. For the other direction, we first write $Q_\C$ for the compact operator in $\cL(H_\C,\dot{H}^r_\C)$ (and $\cL({\dot{H}^{-r}}_\C,H_\C)$) defined by $Q_\C (u + iv) = Qu + i Qv$. Then we note that $\dot{H}_\C^{\theta r} = [H_\C,\dot{H}_\C^r]_\theta$ and $\dot{H}_\C^{(\theta-1) r} = [\dot{H}_\C^{-r},H_\C]_\theta$ in the case that $r > 0$ while $\dot{H}_\C^{\theta r} = [\dot{H}_\C^r,H_\C]_{1-\theta}$ and $\dot{H}_\C^{(\theta-1) r} = [H_\C,\dot{H}_\C^{-r}]_{1-\theta}$ in the case that $r < 0$. Therefore, $Q_\C \in \cL(\dot{H}_\C^{(\theta-1) r},\dot{H}_\C^{\theta r})$ and by restriction to the real elements of $\dot{H}_\C^{(\theta-1) r}$, it follows that $Q \in \cL(\dot{H}^{(\theta-1) r},\dot{H}^{\theta r})$. For a fix $\theta \in [0,1]$, let us regard $Q_\C$ and $Q$ as operators in these spaces. Note that $Q_\C$ is compact if and only if $Q$ is compact. Moreover, the vector $v \in \dot{H}_\C^{(\theta-1) r}$ is an eigenvector of $Q_\C^* Q_\C$ with (real and positive) eigenvalue $\mu$ if and only if $\inpro[\dot{H}_\C^{\theta r}]{Q_\C v}{Q_\C u} = \mu \inpro[\dot{H}_\C^{(\theta-1) r}]{v}{u}$ for all $u \in \dot{H}_\C^{(\theta-1) r}$. From this it follows that $\bar{v}$ is also an eigenvector of $Q_\C^* Q_\C$ and that $(v + \bar{v})/2$ is an eigenvector of $Q^* Q$. Similarly, if $u \in \dot{H}^{(\theta-1) r}$ is an eigenvector of $Q^* Q$, it is also an eigenvector of $Q_\C^* Q_\C$ in $\dot{H}_\C^{(\theta-1) r}$ with the same eigenvalue. In other words, the sets of singular values of $Q_\C \in \cL(\dot{H}_\C^{(\theta-1) r},\dot{H}_\C^{\theta r})$ and $Q \in \cL(\dot{H}^{(\theta-1) r},\dot{H}^{\theta r})$ coincide. Therefore, $Q_\C \in \cL_p(\dot{H}_\C^{-r},H_\C) \cap \cL_p(H_\C,\dot{H}_\C^{r})$, which, using the results of~\cite{G74}, implies that $Q_\C \in \cL_p(\dot{H}_\C^{(\theta-1) r},\dot{H}_\C^{\theta r})$ for all $\theta \in [0,1]$. Another appeal to the correspondence of the singular values of $Q$ and $Q_\C$ finishes the proof.
	\end{proof}
	
	In the spirit of the previous result, let us also note that the equality in~\eqref{eq:tracecond} is true when $Q$ is also assumed to be positive semidefinite (so that $Q^{1/2}$ is well-defined). Indeed, the right hand side guarantees that $Q^{1/2}$ extends to $\dot H^{-r}$ so for $u \in H$
	\begin{equation*}
	\norm{\Lambda^{\frac{r}{2}} Q u}{H} \le \norm{\Lambda^{\frac{r}{2}} Q^{\frac{1}{2}}}{\cL(H)} \norm{Q^{\frac{1}{2}}\Lambda^{\frac{r}{2}}}{\cL(H)} \norm{\Lambda^{-\frac{r}{2}} u}{H}.
	\end{equation*}
	Therefore $\Lambda^{{r/2}} Q$ also extends to $\dot H^{-r}$. The operator $\Lambda^{{r/2}} Q \Lambda^{{r/2}}$ is symmetric and positive semidefinite on $H$, so (since $(\Lambda^{{r/2}} Q^{{1/2}})^* e_j = Q^{{1/2}} \Lambda^{{r/2}} e_j$ on an eigenfunction $e_j$ of $\Lambda$) we have 
	\begin{align*}
	\norm{\Lambda^{\frac{r}{2}} Q \Lambda^{\frac{r}{2}}}{\cL_1(H)} &= \sum_{j = 1}^\infty \inpro[H]{\Lambda^{\frac{r}{2}} Q \Lambda^{\frac{r}{2}} e_j}{e_j} \\
	&= \sum_{j = 1}^\infty \norm{Q^{\frac{1}{2}} \Lambda^{\frac{r}{2}} e_j}{H}^2 = \sum_{j = 1}^\infty \norm{(\Lambda^{\frac{r}{2}} Q^{\frac{1}{2}})^* e_j}{H}^2 = \norm{(\Lambda^{\frac{r}{2}} Q^{\frac{1}{2}})^*}{\cL_2(H)}^2 = \norm{\Lambda^{\frac{r}{2}}Q^{\frac{1}{2}}}{\cL_2(H)}^2.
	\end{align*}
	
	Before we put the abstract framework above into a concrete setting, we note some properties of fractional Sobolev spaces. We denote by $H^m(\cD) = W^{m,2}(\cD)$ and $H^m(\R^d) = W^{m,2}(\R^d)$ the classical Sobolev spaces of order $m$, on a bounded domain $\cD\subset\R^d$ with Lipschitz boundary and $\R^d$, $d \in \N$, respectively. We consider only Sobolev spaces of real-valued functions in this paper. The results we cite are generally proved for Sobolev spaces of complex-valued functions but naturally hold also in this real-valued setting. The norm of $H^m(\R^d)$ is given by
	\begin{equation*}
	\norm{u}{H^m(\R^d)}^2 = \sum_{|\alpha|\le m} \norm{D^\alpha u}{L^2(\R^d)}^2,
	\end{equation*}
	where $D^\alpha$ is the weak derivative with respect to a multiindex $\alpha=(\alpha_1,\dots,\alpha_d)$
	and the norm of $H^m(\cD)$ is defined in the same way. When there is no risk of confusion we write $H^m$ for $H^m(\cD)$ and we set $H^0 = H = L^2(\cD)$. For $s = m + \sigma$, $m \in \N_0$, $\sigma \in (0,1)$, we use the same notation for the fractional Sobolev space $H^s$ as for the classical Sobolev space. The space $H^s$ is equipped with the Sobolev--Slobodeckij norm
	\begin{equation*}
	\norm{u}{H^s} = \left(\norm{u}{H^m}^2 + \sum_{|\alpha| = m} \int_{\cD \times \cD} \frac{|D^\alpha u(x) - D^\alpha u(y)|^2}{|x-y|^{d+2\sigma}}\dd x \dd y\right)^{1/2},
	\end{equation*}
	for $u \in H^s$. Since $\partial \cD$ is Lipschitz, an equivalent \cite[page~25]{G85} norm is given by
	\begin{equation}
	\label{eq:fouriernormD}
	\norm{u}{H^s(\cD)} = \inf_{\substack{v \in H^s(\R^d) \\ v|_{\cD}=u}} \norm{v}{H^s(\R^d)}.
	\end{equation} 
	Here the norm of the fractional Sobolev space $H^s(\R^d)$ is given by 
	\begin{equation}
	\label{eq:fouriernorm}
	\norm{u}{H^s(\R^d)}^2 = \frac{1}{(2 \pi)^{\frac{d}{2}}} \int_{\R^d} |\hat{u}(\xi)|^2 (1 + |\xi|^2)^s \dd \xi,
	\end{equation} 
	where $\hat{u}$ is the Fourier transform of $u \in L^2(\R^d)$. 
	
	The embedding of $H^r$ into $H^s$ is dense and, since $\partial \cD$ is Lipschitz, compact for all $r > s \ge 0$ \cite[Theorem~1.4.3.2]{G85}. This last fact gives us yet another way to characterize $H^r$ and extend the definition to negative $r$. Since, for $m \in \N$, $H^m$ is densely and continuously embedded in $H$, there is a positive definite self-adjoint operator $\Theta_m$ such that $\dom(\Theta_m) = H^m$ and $\norm{\Theta^m u}{H} = \norm{u}{H^m}$ \cite[Section~1.2]{LM72}. The compactness of the embedding implies \cite[Sections~4.5.2-4.5.3]{T92} that $\Theta_m^{-1}$ is compact and the spaces $H_m^{r}=\dom(\Theta_m^{{r/2}})$, $r \in \R^+$, can now be constructed by the spectral decomposition of the operator. The space $H_m^{-r}$ is, like $\dot{H}^{-r}$, defined as the completion of $H$ with respect to the norm $\norm{\Theta_m^{-{r/2}}}{H}$. Naturally, $H^r_{\C}$ is isometrically isomorphic to the corresponding fractional Sobolev space of complex-valued functions on $\cD$. By~\cite[Theorem~1.35]{Y10} we then obtain, for $r \in [0,2]$, $H_{m,\C}^{r} = [H_{\C},H_{\C}^{m}]_{r/2} = H^{rm/2}_{\C}$. Thus, taking the real parts of these spaces, we obtain $H_{m}^{r} = H^{rm/2}$ in the sense of an isometric isomorphism. We define $H^{-r}$ for $r \in [0,m]$ by $H^{-2r/m}_m$, and note that this definition is independent of $m$ due to the fact that $H^{-r} = (H_m^{2r/m})'=(H^{r})'$. By this characterization of the norm of $H^r$, $r \in \R$, we can repeat the proof of Lemma~\ref{lem:symmetricschatten} to obtain the following analogous result.
	\begin{lemma}
		\label{lem:symmetricschatten2}
		Let $Q \in \cL(H)$ be symmetric. Then the following three claims are equivalent for $p \in [1,\infty)$ and $r \in \R$:
		\begin{enumerate}[label=(\roman*)]
			\item %
			$\norm{Q}{\cL_p(H,{H}^r)}< \infty$.
			\item %
			$\norm{Q}{\cL_p({H}^{-r},H)}< \infty$.
			\item %
			$\norm{Q}{\cL_p({H}^{(\theta-1) r},{H}^{\theta r})} < \infty$ for all $\theta \in [0,1]$.
		\end{enumerate}
	\end{lemma}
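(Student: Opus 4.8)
The plan is to recognize that, after fixing an integer $m \ge |r|$, the family $(H^s)_{|s| \le m}$ is nothing but the abstract fractional-power scale $(\dot{H}^s)$ generated by a single positive definite, self-adjoint operator with compact inverse, so that Lemma~\ref{lem:symmetricschatten} applies verbatim with $\Lambda$ replaced by that operator. Concretely, the identity $H^s = H_m^{2s/m} = \dom(\Theta_m^{s/m})$, valid for $0 \le s \le m$, suggests setting $A := \Theta_m^{2/m}$. Then $A$ is, like $\Theta_m$, positive definite, self-adjoint and has compact inverse, and $\dom(A^{s/2}) = H^s$ with $\norm{A^{s/2} u}{H} = \norm{u}{H^s}$ for $0 \le s \le m$; in other words, the scale built from $A$ by the prescription of Section~\ref{sec:frac} coincides isometrically with $(H^s)$ on the range of exponents relevant to a fixed $r$.

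First I would check that all abstract ingredients used in the proof of Lemma~\ref{lem:symmetricschatten} are in place for this $A$. The duality $H^{-s} = (H^s)'$ recorded above is exactly the isometric identification $\dot{H}^{-s} \cong (\dot{H}^s)'$ exploited there, because the negative-order spaces $H_m^{-s}$ were defined as completions of $H$ under the $\Theta_m$-spectral norms, mirroring the construction of $\dot{H}^{-s}$. Likewise the interpolation identities $H^{\theta s} = [H, H^s]_\theta$ and $H^{(\theta-1)s} = [H^{-s}, H]_\theta$ (and their $s<0$ counterparts) follow for the $H$-scale from the cited interpolation results, matching the ones used for $(\dot{H}^s)$.

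With these correspondences, the proof proceeds by repeating the two halves of the earlier argument. For the equivalence of (i) and (ii) I would introduce the adjoints $\hat{Q}^* \in \cL(H^r, H)$ and $\check{Q}^* \in \cL(H, H^{-r})$ of $Q$ relative to $\cL(H, H^r)$ and $\cL(H^{-r}, H)$, derive the analogue $A^{-r/2}\check{Q}^* = A^{r/2} Q$ of~\eqref{eq:lem:symmetricschatten:eq1}, and conclude $Q\check{Q}^* = \hat{Q}^* Q$, so that $Q \in \cL(H, H^r)$ and $Q \in \cL(H^{-r}, H)$ share the same singular values and hence the same $\cL_i$-norms via~\eqref{eq:singval-characterization}. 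The implication (iii) $\Rightarrow$ (i),(ii) is immediate (take $\theta = 1$ and $\theta = 0$, respectively), and the reverse is the complex-interpolation argument of~\cite{G74,CG94} applied to the scale $(H^s)$. I expect no genuine obstacle here: the entire content lies in the bookkeeping of the previous paragraph, namely verifying that $A$ satisfies the same structural hypotheses as $\Lambda$ and that the duality $H^{-r} \cong (H^r)'$ is isometric; once that is secured, the argument is word-for-word identical to the proof of Lemma~\ref{lem:symmetricschatten}.
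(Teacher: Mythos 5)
Your proposal is correct and is essentially the paper's own argument: the paper proves Lemma~\ref{lem:symmetricschatten2} precisely by invoking the characterization $H^s = H_m^{2s/m} = \dom(\Theta_m^{s/m})$ and repeating the proof of Lemma~\ref{lem:symmetricschatten} verbatim for this spectral scale, which is exactly your construction with $A = \Theta_m^{2/m}$. One pedantic caveat: the identification $H_m^{2s/m} = H^s$ obtained by interpolation holds with \emph{equivalence} rather than isometry of norms (so the equalities of $\cL_i$-norms in Lemma~\ref{lem:symmetricschatten} become two-sided bounds), which is harmless here since Lemma~\ref{lem:symmetricschatten2} asserts only finiteness.
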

	The same characterization also allows us to deduce the following lemma, which tells us when the embedding $H^r\hookrightarrow H^s$, $r>s$, is of a particular Schatten class.
	\begin{lemma}
		\label{lem:sobolevembedding}
		For $ -\infty < s < r < \infty$, the embedding $I_{H^r \hookrightarrow H^s}$ fulfills:
		\begin{equation*}
		\norm{I_{H^r \hookrightarrow H^s}}{\cL_p(H^r, H^s)} < \infty \iff r -s > d/p.
		\end{equation*}
	\end{lemma}
	\begin{proof}
		We pick some $m \in \N$ such that $m \ge \max(|r|,|s|)$ and write $(f_j)_{j=1}^\infty$ for the orthonormal basis of eigenfunctions of the operator $\Theta_m$ that is associated with the space $H^m$. Then
		\begin{equation*}
		v \in H^r \iff v \in H_m^{\frac{2r}{m}} \iff \sum_{j = 1}^{\infty} \left|\inpro[H]{\Theta_m^{\frac{r}{m}} v}{ f_j}\right|^2 < \infty \iff \sum_{j = 1}^{\infty} \left|\inpro[H^s]{\Theta_m^{\frac{r-s}{m}} v}{ \Theta_m^{-\frac{s}{m}} f_j}\right|^2  < \infty,
		\end{equation*}
		which gives $H^r = \dom(\Theta_m^{(r-s)/m})$, $\Theta_m^{(r-s)/m}$ being considered as an operator on $H^s$. Therefore, \cite[Lemma~3]{T67} implies that the singular values of $I_{H^r \hookrightarrow H^s}$ coincide with the eigenvalues of $\Theta^{(s-r)/m}$. By the same argument, the singular values of $I_{H^{r-s} \hookrightarrow H}$ coincide with the eigenvalues of the same operator. It therefore suffices to show the claim for $I_{H^{r-s} \hookrightarrow H}$.
		
		The result is proven for $\partial \cD \in \cC^\infty$ in~\cite{T67}, specifically as a consequence of~\cite[Lemma~2, Satz~2]{T67}. For the general case, we first note that when $\partial \cD$ is Lipschitz, there is an operator $P \in \cL(H^{r-s}(\cD),H^{r-s}(\R^d))$, such that $P u |_{\cD} = u$ \cite[Theorem~1.4.3.1]{G85}. For an arbitrary bounded domain~$\cG \supset \cD$ such that $\partial \cG \in \cC^\infty$, we have
		\begin{equation*}
		I_{H^{r-s}(\cD) \hookrightarrow L^2(\cD)} = I_{L^2(\cG) \hookrightarrow L^2(\cD)} I_{H^{r-s}(\cG) \hookrightarrow L^2(\cG)} I_{H^{r-s}(\R^d) \hookrightarrow H^{r-s}(\cG)} P.
		\end{equation*}
		By definition of the norm~\eqref{eq:fouriernorm}, we note that $I_{H^{r-s}(\R^d) \hookrightarrow H^{r-s}(\cG)}$ is bounded, and clearly the restriction $I_{L^2(\cG) \hookrightarrow L^2(\cD)}$ is, too. Therefore, sufficiency of $r-s>d/p$ follows as a consequence of~\eqref{eq:schatten-ideal}. Necessity follows by an analogous argument: if $I_{H^{r-s}(\cD) \hookrightarrow L^2(\cD)} \in \cL_{p}(H^{r-s}(\cD),L^2(\cD))$ then  $I_{H^{r-s}(\tilde \cG) \hookrightarrow L^2(\tilde \cG)} \in \cL_{p}(H^{r-s}(\tilde \cG),L^2(\tilde \cG))$ for $p \in [1,\infty)$ and any domain $\tilde \cG \subset \cD$ with $\partial \tilde \cG \in \cC^\infty$.
	\end{proof}
	
	We now let the spaces $\dot{H}^r$, $r \ge 0$, obtain a concrete meaning by taking $\lambda$ to be a bilinear form on $V \subset H^1(\cD) \subset L^2(\cD)=H$ given by
	\begin{equation*}
	\lambda(u,v) = \sum_{i,j = 1}^d \int_\cD a_{i,j} D^i u D^j v \dd x + \int_\cD c u v \dd x,
	\end{equation*}
	where $D^{j}$ denotes weak differentiation with respect to $x_j$, $j = 1, \ldots, d$. The coefficients $a_{i,j}$, $i,j = 1, \ldots, d$, are $\cC^1(\bar{\cD})$ functions  fulfilling $a_{i,j} = a_{j,i}$. Moreover,  we assume that there is a constant $\lambda_0 > 0$ such that for all $y \in \R^d$ and almost all $x \in \cD$, $\sum^{d}_{i,j=1} a_{i,j} (x) y_i y_j \ge \lambda_0 |y|^2$. The function $c \in L^\infty(\cD)$ is non-negative almost everywhere on $\cD$. 
	
	We shall consider two cases of boundary conditions for $\lambda$. In the first case (Dirichlet boundary conditions) we take $V = H^1_0(\cD) = \{v \in H^1 : \gamma v = 0 \}$. Here $\gamma \colon H^r \to L^2(\partial \cD)$ denotes the trace operator, an extension of the mapping $v \mapsto v|_{\partial \cD}$. It is well-defined on $H^r$ for $r > 1/2$ \cite[Theorem~1.3.9]{Y10}.
	In the second case (Neumann boundary conditions) we take $V = H^1$, and assume in addition that there is a constant $c_0 > 0$ such that $c(x) \ge c_0$ almost everywhere on $\cD$. Then $\lambda$ is a symmetric, bilinear form on $V$. The operator $\Lambda$ is regarded as a realization of the strongly elliptic operator 
	\begin{equation*}
	-\sum_{i,j = 1}^d D^j \left(a_{i,j} D^i\right) + c
	\end{equation*}
	on $H$, with boundary conditions $\gamma v = 0$ on $\partial \cD$ in the first case and ${\partial v}/{\partial \nu_\Lambda} = 0 \text{ on } \partial \cD $ in the second case. Here
	\begin{equation*}
	\frac{\partial v}{\partial \nu_\Lambda} = \sum_{i,j = 1}^d n_i a_{i,j} \gamma D^{j} v,
	\end{equation*}
	with $(n_1, \ldots, n_d)$ being the outward unit normal to $\partial \cD$, is well-defined as an element in $L^2(\partial \cD)$ for $v \in H^r$, $r>3/2$. Since the embedding of $V$ into $L^2(\cD)$ is compact and $\Lambda^{-1}$ maps into $V$, the assumption that this operator is compact is true.
	
	Next, we relate the spaces $(\dot{H}^{s})_{s \in [0,2]}$ to $(H^{s})_{s \in [0,2]}$. In the case of Dirichlet boundary conditions, we have
	\begin{equation}
	\label{eq:sobolev_id_1}
	\dot{H}^s = 
	\begin{cases}
	H^s & \text{ if } s \in [0,1/2), \\
	\left\{u \in H^s : \gamma u  = 0 \right\} & \text{ if } s \in (1/2,3/2) \cup (3/2,2],
	\end{cases}	
	\end{equation}
	with norm equivalence \cite[Theorem~16.13]{Y10}. In the case of Neumann boundary conditions, 
	\begin{equation}
	\label{eq:sobolev_id_2}
	\dot{H}^s = 
	\begin{cases}
	H^s & \text{ if } s \in [0, 3/2), \\
	\left\{u \in H^s : {\partial u}/{\partial \nu_\Lambda} = 0 \right\} &\text{ if } s \in (3/2,2],
	\end{cases}	
	\end{equation}
	with norm equivalence \cite[Theorem~16.9]{Y10}.  Note that since $\dot{H}^{-s} = (\dot{H}^s)'$ and ${H}^{-s} = ({H}^s)'$ for $s\ge 0$, analogous results hold for negative exponents $s \in (-1/2,0]$ and $s \in (-3/2,0]$, respectively.
	
	For the identity~\eqref{eq:sobolev_id_2} with $s \in [0,1]$ it suffices to assume that $\partial \cD$ is Lipschitz \cite[Theorem~16.6]{Y10}. Moreover, for both~\eqref{eq:sobolev_id_1} and~\eqref{eq:sobolev_id_2}, one could replace convexity of $\cD$ with $\cC^2$ regularity of $\partial \cD$. In this case, the equivalence in~\eqref{eq:sobolev_id_1} holds also for $s=3/2$ \cite[Theorems~16.7, 16.12]{Y10}.
	
	\subsection{H\"older spaces on bounded domains}
	
	We now introduce the notation we use for H\"older spaces on $\bar{\cD} = \cD \cup \partial \cD$. We write $\cC(\bar{\cD})$ for the space of continuous functions on $\bar{\cD}$ with the supremum norm and $\cC^{0,\sigma}(\bar{\cD}) \subset \cC(\bar{\cD})$ for the space of bounded H\"older continuous functions $f \colon \bar{\cD} \to \R$ with H\"older exponent $\sigma \in (0,1]$. 
	It is equipped with the norm
	\begin{equation*}
	\norm{f}{\cC^{0,\sigma}(\bar{\cD})} = \norm{f}{\cC(\bar{\cD})} + \sup_{\substack{x,y \in \bar{\cD} \\ x \neq y}} \frac{|f(x)-f(y)|}{|x-y|^\sigma}.
	\end{equation*}
	For $k \in \N$, the space $\cC^{k}(\bar{\cD})$ consists of all functions $f \colon \bar{\cD} \to \R$ such that $\partial^\alpha f \in \cC(\bar{\cD})$ for all $|\alpha| = \alpha_1 + \cdots \alpha_d \le k$ while for $\sigma \in (0,1]$, the space $\cC^{k,\sigma}(\bar{\cD})$ consists of all functions $f \in \cC^{k}(\bar{\cD})$ such that $\partial^\alpha f \in \cC^{0,\sigma}(\bar{\cD})$ for all $|\alpha| = k$. Here $\partial^\alpha$ is the classical derivative with respect to a multiindex $\alpha=(\alpha_1,\dots,\alpha_d)$
	and $\partial^\alpha f \in \cC(\bar{\cD})$ means that $\partial^\alpha f \in \cC(\cD)$ and extends to a continuous function on $\bar{\cD}$. 
	We equip $\cC^{k}(\bar{\cD})$ with the norm 
	\begin{equation*}
	\norm{f}{\cC^{k,\sigma}(\bar{\cD})} = \sum_{\alpha \le k } \norm{\partial^\alpha f}{\cC(\bar{\cD})}
	\end{equation*}
	and $\cC^{k,\sigma}(\bar{\cD})$ with the norm
	\begin{equation*}
	\norm{f}{\cC^{k}(\bar{\cD})} = \sum_{\alpha < k } \norm{\partial^\alpha f}{\cC(\bar{\cD})} + \sum_{\alpha = k } \norm{\partial^\alpha f}{\cC^{0,\sigma}(\bar{\cD})}.
	\end{equation*}
	We will also need the space $\cC^{k,k}(\bar{\cD} \times \bar{\cD})$, which we define as the space of continuous functions $f \colon \bar{\cD} \times \bar{\cD} \to \R$ such that $\partial_1^{\alpha} \partial_2^{\alpha} f \in \cC(\bar{\cD} \times \bar{\cD})$ for all $|\alpha| \le k$. The space $\cC^{k,k,\sigma}(\bar{\cD} \times \bar{\cD})$ consists of all functions $f \in \cC^{k,k}(\bar{\cD} \times \bar{\cD})$ such that $\partial_1^{\alpha} \partial_2^{\alpha} f \in \cC^{0,\sigma}(\bar{\cD} \times \bar{\cD})$ for all $|\alpha| = k$. Here $\partial_i^\alpha$ denotes differentiation with respect to the $i$th variable of $f$. 
	We equip $\cC^{k,k}(\bar{\cD}\times\bar{\cD})$ with the norm 
	\begin{equation*}
	\norm{f}{\cC^{k,k}(\bar{\cD} \times \bar{\cD})} =  \sum_{\alpha \le k } \norm{\partial_1^\alpha \partial_2^\alpha f}{\cC(\bar{\cD}\times\bar{\cD})}
	\end{equation*}
	and $\cC^{k,k,\sigma}(\bar{\cD}\times\bar{\cD})$ with the norm 
	\begin{equation*}
	\norm{f}{\cC^{k,k,\sigma}(\bar{\cD} \times \bar{\cD})} =  \sum_{\alpha < k } \norm{\partial_1^\alpha \partial_2^\alpha f}{\cC(\bar{\cD}\times\bar{\cD})} + \sum_{\alpha = k } \norm{\partial_1^\alpha \partial_2^\alpha f}{\cC^{0,\sigma}(\bar{\cD}\times\bar{\cD})}.
	\end{equation*}
	
	\subsection{Integral operators and reproducing kernel Hilbert spaces}
	
	Let $q \colon \bar{\cD} \times \bar{\cD} \to \R$ be a positive semidefinite symmetric continuous kernel. We define an  operator $Q \in \cL(H)$ by
	\begin{equation*}
	Q u (x) = \int_{\cD} q(x,y) u(y) \dd y
	\end{equation*}
	for $x \in \bar{\cD}$.
	
	By Mercer's theorem \cite[Theorem~A.8]{PZ07}, $Q$ is then self-adjoint, positive semidefinite and of trace class. Moreover, $Q$ admits an orthonormal eigenbasis $(q_j)_{j =1}^\infty$ with a corresponding non-increasing sequence of non-negative eigenvalues $(\mu_j)_{j =1}^\infty$. The eigenfunctions corresponding to nonzero eigenvalues are continuous. The kernel can then, with $x,y \in \bar{\cD}$, be represented by 
	\begin{equation}
	\label{eq:kernelexpansion}
	q(x,y) = \sum_{j=1}^{\infty} \mu_j q_j(x) q_j(y),
	\end{equation}
	where the sequence converges uniformly and absolutely on $\bar{\cD} \times \bar{\cD}$. 
	
	We write $Q^{-1/2} = (Q^{1/2}|_{\ker(Q^{1/2})^\perp})^{-1} \colon Q^{1/2}(H) \to \ker(Q^{1/2})^\perp \subset H$ for the pseudo-inverse of $Q^{1/2}$. The space $Q^{1/2}(H)$, the range of $Q^{1/2}$ in $H$, is then a Hilbert space with respect to the inner product $\inpro[Q^{1/2}(H)]{\cdot}{\cdot} = \inpro[H]{Q^{-1/2}\cdot}{Q^{-1/2}\cdot}$. Under the notational convention $0/0=0$, we note that, in light of Mercer's theorem,
	\begin{equation}
	\label{eq:mercer-qhalf-rep}
	Q^{\frac{1}{2}}(H) = \left\{f \in H: \sum_{j=1}^\infty \frac{|\inpro[H]{f}{q_j}|^2}{\mu_j} < \infty \right\}.
	\end{equation}
	By the same theorem, it follows that for all $f \in H, N \in \N$,
	\begin{equation*}\Big|\sum_{j = 1}^N \inpro[H]{Q^{1/2}f}{ q_j} q_j(x)\Big|^2 
	\le \Big(\sum_{j = 1}^N \mu_j q_j(x)^2\Big) \Big(\sum_{j = 1}^N |\inpro[H]{f}{q_j}|^2\Big)  \le q(x,x) \norm{f}{H}^2.
	\end{equation*}
	Therefore, the elements in $Q^{\frac{1}{2}}(H)$ are continuous functions on $\bar{\cD}$, as opposed to just equivalence classes of functions on $\cD$. 
	
	The kernel $q$ is said to satisfy the Dirichlet or Neumann boundary conditions if 
	\begin{equation}
	\label{eq:QDirichlet}
	q(\cdot,y) = q(y,\cdot) = 0
	\end{equation}
	or
	\begin{equation}
	\label{eq:QNeumann}
	\sum_{i,j = 1}^d n_i(\cdot) a_{i,j}(\cdot) \frac{\partial q(\cdot,y)}{\partial x_j} = \sum_{i,j = 1}^d n_i(\cdot) a_{i,j}(\cdot) \frac{\partial q(y,\cdot)}{\partial y_j} = 0
	\end{equation}
	on $\partial \cD$, respectively, for all $y \in \cD$. Note that, provided that $q$ is sufficiently smooth, this guarantees that $Q u$ fulfills the corresponding boundary conditions for arbitrary $u \in H$. 
	
	We will make use of the theory of reproducing kernel Hilbert spaces. Recall (see, e.g.,~\cite{BT04}) that a Hilbert space $H_q(\cE)$ is said to be the reproducing kernel Hilbert space of the kernel $q$ if it is a Hilbert space of functions on a non-empty subset $\cE \subseteq \R^d$ such that the conditions 
	\begin{enumerate}[label=(\roman*)]
		\item \label{RKHSprop1} $q(x,\cdot) \in H_q(\cE)$ for all $x \in \cE$ and
		\item \label{RKHSprop2} for all $f \in H_q(\cE), x \in \cE$, $f(x)=\inpro[H_q(\cE)]{f}{q(x,\cdot)}$
	\end{enumerate}
	are satisfied.
	The Moore--Aronszajn theorem (see~\cite[Theorem~3]{BT04}) can be used to construct $H_q(\cE)$ explicitly. Given a (not necessarily continuous) positive semidefinite function $q$, we write $H_{q,0}(\cE)$ for the linear span of functions of the type $f = \sum_{i = 1}^n \alpha_i q(x_i,\cdot)$, $(x_i)_{i=1}^n \subset \cE$, $(\alpha_i)_{i=1}^n \subset \R$, $n \in \N$. We define a symmetric positive semidefinite bilinear form on $H_{q,0}(\cE)$ by
	\begin{equation*}
	\inpro[H_{q}(\cE)]{f}{g} = \sum_{i = 1}^n \sum_{j = 1}^m \alpha_{i} \beta_j q(x_i,y_j),
	\end{equation*}
	for $f$ as above, $g = \sum_{j = 1}^m \beta_j q(y_j,\cdot) \in H_{q,0}(\cE)$, $(y_j)_{j=1}^m \subset \cE$ and $(\beta_j)_{j=1}^m \subset \R$. By the Cauchy--Schwarz inequality for such forms, $|f(x)| = | \inpro[H_{q}(\cE)]{f}{q(\cdot,x)} | \le \inpro[H_q(\cE)]{f}{f}^{1/2} q(x,x)^{1/2}$ for all $x \in \cE$ and $f \in H_{q,0}(\cE)$.
	This shows that $\inpro[H_q(\cE)]{\cdot}{\cdot}$ is an inner product on $H_{q,0}(\cE)$. The space $H_q(\cE)$ is now given as the set of functions $f$ on $\cE$ for which there is a Cauchy sequence $(f_j)_{j=1}^\infty$ in $H_{q,0}(\cE)$ converging pointwise to $f$. It is a Hilbert space with inner product $\inpro[H_q(\cE)]{f}{g} = \lim_{n\to\infty} \inpro[H_q(\cE)]{f_n}{g_n}$ and it fulfills the conditions~\ref{RKHSprop1} and~\ref{RKHSprop2}. In fact, $H_q(\cE)$ is the unique Hilbert space of functions on $\cE$ fulfilling these conditions. From here on, we use the shorthand notation $H_q = H_q(\bar{\cD})$. 
	
	The property~\ref{RKHSprop2} is referred to as the reproducing kernel property of the space $H_q$. When $q$ is sufficiently smooth, then an analogous property holds for the derivatives of $f \in H_q$. To be precise, if $q \in \cC^{k,k}(\bar{\cD} \times \bar{\cD})$, then $H_q \hookrightarrow \cC^{k}(\bar{\cD})$ continuously and
	\begin{equation}
	\label{eq:RKHSderivprop} 
	\partial^\alpha f(x) = \partial^\alpha \inpro[H_q]{f}{ q(x,\cdot)} =  \inpro[H_q]{f}{\partial_1^\alpha q(x,\cdot)} 
	\end{equation}
	for all $f \in H_q, x \in \bar{\cD}$ and $|\alpha| \le k$. Moreover, the symmetric function $\partial_1^\alpha \partial_2^\alpha q$ possesses the reproducing property
	\begin{equation}
	\label{eq:RKHSderivprop2}
	\partial_1^\alpha \partial_2^\alpha q(y,x) = \inpro[H_q]{ \partial_1^\alpha q(y,\cdot)}{\partial_1^\alpha q(x,\cdot)},
	\end{equation}
	for $x,y \in \bar{\cD}$ and $|\alpha| \le k$. These facts are proven in \cite{Z08} under the stronger condition $q \in \cC^{2k}(\bar{\cD} \times \bar{\cD})$ (defined analogously to $\cC^{2k}(\bar{\cD}$)) but the proof works also in the case of our weaker condition, cf.~\cite[Corollary~4.36]{SC08} for the case that $q \in \cC^{k,k}(\cD \times \cD)$.
	
	A key observation that we will make use of is the fact that, under our assumptions on $q$, $Q^{{1/2}}(H) = H_q$. This can be seen by explicitly checking that~\ref{RKHSprop1} and~\ref{RKHSprop2} are fulfilled by $Q^{{1/2}}(H)$, using the expressions~\eqref{eq:kernelexpansion} and~\eqref{eq:mercer-qhalf-rep}. We refer to~\cite[Theorem~10.29]{W04} and~\cite{W70} for analogous arguments under somewhat different assumptions on $q$. 
	
	In the next two sections, we will identify conditions on $q$ which ensure that $Q \in \cL_p(\dot{H}^{s},\dot{H}^r)$ for appropriate powers $r,s \in \R$ and $p \ge 1$. We first consider the non-homogeneous case, which is to say that $q(x,y)$ depends explicitly on $x,y$, not necessarily only on $x-y$ or $|x-y|$ (the homogeneous and isotropic cases, respectively). 
	
	\section{Regularity of integral operators in the non-homogeneous case}
	\label{sec:non-hom}
	
	In this section we consider H\"older regularity assumptions on $q$. We start by deriving conditions for which the trace class condition~\eqref{eq:tracecond} holds. We recall that this is given by
	\begin{equation*}
	\norm{\Lambda^{\frac{r}{2}}Q\Lambda^{\frac{r}{2}}}{\cL_1(H)} = \norm{\Lambda^{\frac{r}{2}}Q^{\frac{1}{2}}}{\cL_2(H)}^2 < \infty
	\end{equation*}
	with $r\ge0$. We use reproducing kernel Hilbert spaces in the proof of the following two theorems, corresponding to the two types of boundary conditions considered. Below we write ''for all $|\alpha|=1$'' as shorthand for ''for all multiindices $\alpha = (\alpha_1, \ldots, \alpha_d)$ such that $|\alpha|=1$''.
	
	\begin{theorem}
		\label{thm:nonhomtracedirichlet}
		Let $\Lambda$ be the elliptic operator associated with the bilinear form $\lambda$ with Dirichlet boundary conditions. Then, the operator $Q$ with kernel $q \in \cC(\bar{\cD}\times\bar{\cD})$ satisfies~\eqref{eq:tracecond}
		\begin{enumerate}[label=(\roman*)]
			\item %
			for all $r \in [0,\sigma/2)$ if there exists $\sigma \in (0,1]$ such that for a.e.\ $z \in \cD$, $q(z,\cdot) \in C^{0,\sigma}(\bar{\cD})$ with $\esssup_{z \in \cD} \norm{q(z,\cdot)}{C^{0,\sigma}(\bar{\cD})} < \infty$.
		\end{enumerate}
		If $q$ in addition satisfies the boundary conditions~\eqref{eq:QDirichlet}, then $Q$ satisfies~\eqref{eq:tracecond}
		\begin{enumerate}[label=(\roman*),resume]
			\item \label{thm:nonhomtracedirichlet:ii} for all $r \in [0,(1+\sigma)/2) \setminus \{1/2\}$ if $q$ is once differentiable in the first variable, and there is a constant $\sigma \in (0,1]$ such that for all $|\alpha|=1$ and a.e.\ $z \in \cD$, $\partial_1^{\alpha} q(z,\cdot) \in C^{0,\sigma}(\bar{\cD})$ with $\esssup_{z \in \cD}\norm{\partial_1^{\alpha} q(z,\cdot)}{C^{0,\sigma}(\bar{\cD})} < \infty$,
			\item \label{thm:nonhomtracedirichlet:iii:new} for all $r\in [0,1]\setminus \{1/2\}$ if $q \in C^{1,1}(\bar{\cD}\times\bar{\cD})$,
			\item \label{thm:nonhomtracedirichlet:iii} for all $r\in [0,(2+\sigma)/2)\setminus \{1/2\}$ if there exists $\sigma \in (0,1]$ such that $\norm{q}{C^{1,1,\sigma}(\bar{\cD}\times\bar{\cD})} < \infty$,
			\item \label{thm:nonhomtracedirichlet:iv} for all $r \in [0,(3+\sigma)/2) \setminus \{1/2,3/2\}$ if $q \in C^{1,1}(\bar{\cD}\times\bar{\cD})$ and, for all $|\alpha| = 1$, $\partial_1^\alpha \partial_2^\alpha q$ is once differentiable in the first variable and there exists $\sigma \in (0,1]$ such that for all $|\alpha'|=1$ and a.e.\ $z \in \cD$, $\partial_1^{\alpha+\alpha'} \partial_2^\alpha q(z,\cdot) \in C^{0,\sigma}(\bar{\cD})$ with $\esssup_{z \in \cD}\norm{\partial_1^{\alpha+\alpha'} \partial_2^\alpha q(z,\cdot)}{C^{0,\sigma}(\bar{\cD})} < \infty$,
			\item \label{thm:nonhomtracedirichlet:iv:temp} for all $r \in [0,2] \setminus \{1/2,3/2\}$ if $q \in C^{2,2}(\bar{\cD}\times\bar{\cD})$.
		\end{enumerate}
	\end{theorem}
	\begin{proof}
		Let $(e_j)_{j=1}^\infty$ be the eigenbasis of $\Lambda$. We note that $(Q^{{1/2}}e_j)_{j=1}^\infty$ is then an orthonormal basis of $Q^{{1/2}}(H) = H_q$. For the first statement, by this observation, the definition of the Hilbert--Schmidt norm, \eqref{eq:sobolev_id_2}, and the reproducing kernel property of $H_q$, we have
		\begin{align*}
		\norm{\Lambda^{\frac{r}{2}}Q^{\frac{1}{2}}}{\cL_2(H)}^2 &= \sum_{j = 1}^{\infty} \norm{Q^{\frac{1}{2}} e_j}{\dot{H}^r}^2 \lesssim \sum_{j = 1}^{\infty} \norm{Q^{\frac{1}{2}} e_j}{H^r}^2 \\ &= \sum_{j = 1}^{\infty} \norm{Q^{\frac{1}{2}} e_j}{H}^2 + \int_{\cD \times \cD} \frac{\sum_{j = 1}^{\infty} |Q^{\frac{1}{2}}e_j(x)-Q^{\frac{1}{2}}e_j(y)|^2}{|x-y|^{d+2r}} \dd x \dd y \\ &= \trace(Q) + \int_{\cD \times \cD} \frac{\sum_{j = 1}^{\infty} \left|\inpro[H_q]{Q^{\frac{1}{2}} e_j}{q(x,\cdot) - q(y,\cdot) }\right|^2 }{|x-y|^{d+2r}} \dd x \dd y \\
		&= \trace(Q) + \int_{\cD \times \cD} \frac{ \norm{q(x,\cdot) - q(y,\cdot)}{H_q}^2 }{|x-y|^{d+2r}} \dd x \dd y \\  
		&= \trace(Q) + \int_{\cD \times \cD} \frac{ (q(x,x)-q(x,y))+(q(y,y)-q(y,x)) }{|x-y|^{d+2r}} \dd x \dd y \\
		&\le \trace(Q) + 2\esssup_{z \in \cD} \norm{q(z,\cdot)}{C^{0,\sigma}(\bar{\cD})} \int_{\cD \times \cD} |x-y|^{-d+\sigma-2r} \dd x \dd y < \infty.
		\end{align*}
		The integral is finite since $\sigma-2r>0$. For case \ref{thm:nonhomtracedirichlet:ii}, we note that $Q^{{1/2}} e_j(x) = \inpro[H_q]{Q^{{1/2}} e_j}{q(x,\cdot)}$ for all $x \in \partial\cD$, so that $Q^{{1/2}} e_j$ inherits the boundary conditions of $q$. The proof is now the same as before, except that we use the mean value theorem 
		to deduce that 
		\begin{align*}
		\norm{q(x,\cdot) - q(y,\cdot)}{H_q}^2 &= (q(x,x)-q(y,x))-(q(x,y)-q(y,y)) \\
		&= \int_{0}^{1} \big(\nabla_1 q((1-s)x+sy,x) - \nabla_1 q((1-s)x+sy,y)\big)\cdot(x-y) \dd s \\
		&\le |x-y| \sum_{j=1}^d \int^1_0 \left| \partial^{x_j}_1 q((1-s)x+sy,x) - \partial^{x_j}_1 q((1-s)x+sy,y) \right| \dd s \\
		&\le d |x-y|^{1+\sigma} \max_{j = 1, \dots, d} \esssup_{z \in \cD} \norm{\partial^{z_j}_1 q(z,\cdot)}{\cC^{0,\sigma}(\bar{\cD})}.
		\end{align*}
		Here $\nabla_1 q = (\partial^{x_1}_1 q, \ldots, \partial^{x_d}_1 q)$ is the gradient with respect to the first component of $q$ and $\partial^{x_j}_1q(x,y)$ is the derivative of $q(x,y) = q(x_1, \dots, x_d,y_1, \dots, y_d)$ with respect to $x_j$.
		This yields
		\begin{align*}
		\norm{\Lambda^{\frac{r}{2}}Q^{\frac{1}{2}}}{\cL_2(H)}^2 \lesssim \trace(Q) + \max_{j = 1, \dots, d} \esssup_{z \in \cD} \norm{\partial^{z_j}_1 q(z,\cdot)}{\cC^{0,\sigma}(\bar{\cD})} \int_{\cD \times \cD} |x-y|^{-d+1+\sigma-2r} \dd x \dd y < \infty.
		\end{align*}
		For case~\ref{thm:nonhomtracedirichlet:iii:new}, we also apply \eqref{eq:RKHSderivprop} and~\eqref{eq:RKHSderivprop2} to see that 
		\begin{align*}
		\norm{\Lambda^{\frac{1}{2}}Q^{\frac{1}{2}}}{\cL_2(H)}^2 \lesssim\sum_{|\alpha|\le 1} \sum_{j = 1}^{\infty} \norm{\partial^{\alpha}Q^{\frac{1}{2}} e_j}{H}^2 
		&= \sum_{|\alpha|\le 1} \sum_{j = 1}^{\infty} \int_{\cD} |\partial^\alpha Q^{\frac{1}{2}} e_j(x)|^2 \dd x \\
		&= \sum_{|\alpha|\le 1} \sum_{j = 1}^{\infty} \int_{\cD} \left|\inpro[H_q]{Q^{\frac{1}{2}} e_j}{\partial_1^\alpha q(x,\cdot)}\right|^2 \dd x \\
		&= \sum_{|\alpha|\le 1} \int_{\cD} \norm{\partial_1^\alpha q(x,\cdot)}{H_q}^2 \dd x = \sum_{|\alpha|\le 1} \int_{\cD} \partial_1^\alpha \partial_2^\alpha q(x,x)\dd x,
		\end{align*}
		which is bounded by a constant times $\norm{q}{C^{1,1}(\bar{\cD}\times\bar{\cD})}$.
		For case~\ref{thm:nonhomtracedirichlet:iii}, we obtain, for $r>1$,
		\begin{align*}
		&\sum_{|\alpha|= 1}  \int_{\cD \times \cD} \frac{\sum_{j = 1}^{\infty} |\partial^{\alpha}Q^{\frac{1}{2}}e_j(x)-\partial^{\alpha}Q^{\frac{1}{2}}e_j(y)|^2}{|x-y|^{d+2(r-1)}} \dd x \dd y \\
		&\quad= \sum_{|\alpha|= 1}\int_{\cD \times \cD} \frac{\sum_{j = 1}^{\infty} \left|\inpro[H_q]{Q^{\frac{1}{2}} e_j}{\partial_1^{\alpha} q(x,\cdot) - \partial_1^{\alpha}q(y,\cdot) }\right|^2 }{|x-y|^{d+2(r-1)}} \dd x \dd y \\
		&\quad= \sum_{|\alpha|= 1}\int_{\cD \times \cD} \frac{ (\partial_1^{\alpha} \partial_2^{\alpha} q(x,x)-\partial_1^{\alpha} \partial_2^{\alpha} q(x,y))+(\partial_1^{\alpha} \partial_2^{\alpha}q(y,y)-\partial_1^{\alpha} \partial_2^{\alpha}q(y,x)) }{|x-y|^{d+2(r-1)}} \dd x \dd y 
		\end{align*}
		so that
		\begin{align*}
		\norm{\Lambda^{\frac{r}{2}}Q^{\frac{1}{2}}}{\cL_2(H)}^2 &\lesssim \sum_{|\alpha|\le 1} \sum_{j = 1}^{\infty} \norm{\partial^{\alpha}Q^{\frac{1}{2}} e_j}{H}^2 + \sum_{|\alpha|= 1}  \int_{\cD \times \cD} \frac{\sum_{j = 1}^{\infty} |\partial^{\alpha}Q^{\frac{1}{2}}e_j(x)-\partial^{\alpha}Q^{\frac{1}{2}}e_j(y)|^2}{|x-y|^{d+2(r-1)}} \dd x \dd y \\
		&\lesssim \norm{q}{C^{1,1}(\bar{\cD}\times\bar{\cD})} + \norm{q}{C^{1,1,\sigma}(\bar{\cD}\times\bar{\cD})} \int_{\cD \times \cD} |x-y|^{-d+2+\sigma-2r} \dd x \dd y < \infty.
		\end{align*}
		Finally, case \ref{thm:nonhomtracedirichlet:iv} is obtained by a modification of this argument, similar to how case \ref{thm:nonhomtracedirichlet:ii} was obtained, while the proof of case~\ref{thm:nonhomtracedirichlet:iv:temp} is analogous to that of case~\ref{thm:nonhomtracedirichlet:iii:new}.
	\end{proof}
	
	\begin{remark}
		\label{rem:bridge-1}
		This result is sharp, in the following sense. Consider the setting of $\cD = (0,1)$ and $\Lambda = (-\Delta)$, where $\Delta$ is equipped with zero boundary conditions. In this setting, the kernel of the operator $Q = \Lambda^{-1}$ is explicitly given by $q(x,y) = \min(x,y)- xy;$ for $x,y \in \cD$, see, e.g., \cite{CFM15}. By Theorem~\ref{thm:nonhomtracedirichlet}, \eqref{eq:tracecond} is satisfied for $r<1/2$. The eigenpairs associated to $Q$ are given by $q_j(x) = \sqrt{2} \sin(\pi j x)$ and $\mu_j = (\pi j)^{-2}$ for $j \in \N$. Therefore, the expression in~\eqref{eq:tracecond} is infinite for $r \ge 1/2$.
	\end{remark}
	
	\begin{theorem}
		\label{thm:nonhomtraceneumann}
		Let $\Lambda$ be the elliptic operator associated with the bilinear form $\lambda$ with Neumann boundary conditions. Then, the operator $Q$ with kernel $q \in \cC(\bar{\cD}\times\bar{\cD})$ satisfies~\eqref{eq:tracecond}
		\begin{enumerate}[label=(\roman*)]
			\item for all $r \in [0,\sigma/2)$ if there exists $\sigma \in (0,1]$ such that for a.e.\ $z \in \cD$, $q(z,\cdot) \in C^{0,\sigma}(\bar{\cD})$ with $\esssup_{z \in \cD} \norm{q(z,\cdot)}{C^{0,\sigma}(\bar{\cD})} < \infty$,
			\item for all $r \in [0,(1+\sigma)/2)$ if $q$ is once differentiable in the first variable, and there is a constant $\sigma \in (0,1]$ such that for all $|\alpha|=1$ and a.e.\ $z \in \cD$, $\partial_1^{\alpha} q(z,\cdot) \in C^{0,\sigma}(\bar{\cD})$ with $\esssup_{z \in \cD}\norm{\partial_1^{\alpha} q(z,\cdot)}{C^{0,\sigma}(\bar{\cD})} < \infty$,
			\item for all $r\in [0,1]\setminus \{1/2\}$ if $q \in C^{1,1}(\bar{\cD}\times\bar{\cD})$,
			\item for all $r\in [0,(2+\sigma)/2)$ if there exists $\sigma \in (0,1]$ such that $\norm{q}{C^{1,1,\sigma}(\bar{\cD}\times\bar{\cD})} < \infty$.
		\end{enumerate}
		If $q$ additionally satisfies the boundary conditions~\eqref{eq:QNeumann}, then $Q$ satisfies~\eqref{eq:tracecond}
		\begin{enumerate}[label=(\roman*),resume]		 
			\item  for all $r \in [0,(3+\sigma)/2) \setminus \{3/2\}$ if $q \in C^{1,1}(\bar{\cD}\times\bar{\cD})$ and, for all $|\alpha| = 1$, $\partial^\alpha \partial^\alpha q$ is once differentiable in the first variable and there exists $\sigma \in (0,1]$ such that for all $|\alpha'|=1$ and a.e.\ $z \in \cD$, $\partial_1^{\alpha+\alpha'} \partial_2^\alpha q(z,\cdot) \in C^{0,\sigma}(\bar{\cD})$ with $\esssup_{z \in \cD}\norm{\partial_1^{\alpha+\alpha'} \partial_2^\alpha q(z,\cdot)}{C^{0,\sigma}(\bar{\cD})} < \infty$,
			\item for all $r \in [0,2] \setminus \{1/2,3/2\}$ if $q \in C^{2,2}(\bar{\cD}\times\bar{\cD})$.
		\end{enumerate}
	\end{theorem}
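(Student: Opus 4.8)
The plan is to follow the proof of Theorem~\ref{thm:nonhomtracedirichlet} line by line, replacing the Dirichlet identification~\eqref{eq:sobolev_id_1} by its Neumann counterpart~\eqref{eq:sobolev_id_2}. As there, I would fix the eigenbasis $(e_j)_{j=1}^\infty$ of $\Lambda$, note that $(Q^{1/2}e_j)_{j=1}^\infty$ is an orthonormal basis of $Q^{1/2}(H)=H_q$, and start from $\norm{\Lambda^{r/2}Q^{1/2}}{\cL_2(H)}^2=\sum_{j=1}^\infty\norm{Q^{1/2}e_j}{\dot H^r}^2$.

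The feature that drives the Neumann statement is that~\eqref{eq:sobolev_id_2} gives $\dot H^s=H^s$ with equivalent norms for \emph{every} $s\in[0,3/2)$, with no boundary restriction. Since the upper endpoints $\sigma/2$, $(1+\sigma)/2$ and $(2+\sigma)/2$ are all at most $3/2$, the range of $r$ in (i)--(iii) is contained in $[0,3/2)$, so $\norm{Q^{1/2}e_j}{\dot H^r}\lesssim\norm{Q^{1/2}e_j}{H^r}$ holds unconditionally there. These three cases then reproduce the corresponding computations of Theorem~\ref{thm:nonhomtracedirichlet}: expand the Sobolev--Slobodeckij norm, rewrite the pointwise and derivative seminorms via the reproducing identities~\eqref{eq:RKHSderivprop} and~\eqref{eq:RKHSderivprop2} as $\norm{q(x,\cdot)-q(y,\cdot)}{H_q}^2$ (resp.\ its first-derivative analogue), bound these by the Hölder/mean-value estimates $\lesssim|x-y|^\sigma$, $\lesssim|x-y|^{1+\sigma}$, $\lesssim|x-y|^{2+\sigma}$, and conclude from finiteness of $\int_{\cD\times\cD}|x-y|^{-d+k+\sigma-2r}\dd x\dd y$ for $k\in\{0,1,2\}$, valid iff $r<(k+\sigma)/2$. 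The decisive difference from the Dirichlet theorem is that no boundary hypothesis and no excluded exponent $1/2$ appear, precisely because the identification below $3/2$ is unconditional.

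Case~(iv) is the genuinely new part, since its range $[0,(3+\sigma)/2)$ pushes $r$ above $3/2$, where~\eqref{eq:sobolev_id_2} becomes $\dot H^s=\{u\in H^s:\partial u/\partial\nu_\Lambda=0\}$. Here I would first show that each $Q^{1/2}e_k$ inherits the conormal condition from $q$: by the derivative reproducing property~\eqref{eq:RKHSderivprop}, $\frac{\partial (Q^{1/2}e_k)}{\partial\nu_\Lambda}(x)=\sum_{i,j=1}^d n_i(x)a_{i,j}(x)\inpro[H_q]{Q^{1/2}e_k}{\partial^{x_j}_1 q(x,\cdot)}=\inpro[H_q]{Q^{1/2}e_k}{\sum_{i,j=1}^d n_i(x)a_{i,j}(x)\partial^{x_j}_1 q(x,\cdot)}$, and the second argument vanishes for $x\in\partial\cD$ exactly when~\eqref{eq:QNeumann} holds. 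This legitimizes $\norm{Q^{1/2}e_k}{\dot H^r}\lesssim\norm{Q^{1/2}e_k}{H^r}$ on $(3/2,2]$ as well, with $r=3/2$ excluded because~\eqref{eq:sobolev_id_2} is undefined there. The remaining estimate is case~(ii) carried out one derivative higher: the lower-order terms $\sum_{|\alpha|\le1}\int_{\cD}\partial_1^\alpha\partial_2^\alpha q(x,x)\dd x$ are controlled by $\norm{q}{\cC^{1,1}(\bar\cD\times\bar\cD)}$, while writing $r=1+(r-1)$ the top-order Slobodeckij seminorm of $Q^{1/2}e_k$ reduces through~\eqref{eq:RKHSderivprop}--\eqref{eq:RKHSderivprop2} to $\norm{\partial_1^\alpha q(x,\cdot)-\partial_1^\alpha q(y,\cdot)}{H_q}^2$; a mean-value argument in the first slot of $\partial_1^\alpha\partial_2^\alpha q$, together with the assumed Hölder bound on $\partial_1^{\alpha+\alpha'}\partial_2^\alpha q$, gives $\lesssim|x-y|^{1+\sigma}$, and the problem collapses to $\int_{\cD\times\cD}|x-y|^{-d+3+\sigma-2r}\dd x\dd y<\infty$, i.e.\ $r<(3+\sigma)/2$.

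I expect the main obstacle to be the rigorous passage of the conormal boundary condition from $q$ to the eigenfunctions $Q^{1/2}e_k$, that is, justifying that $\partial/\partial\nu_\Lambda$ may be brought inside the $H_q$ inner product. This requires the hypotheses of~(iv)---$q\in\cC^{1,1}(\bar\cD\times\bar\cD)$ together with the extra first-order differentiability---to guarantee simultaneously the embedding $H_q\hookrightarrow\cC^1(\bar\cD)$ and the validity of the reproducing identities~\eqref{eq:RKHSderivprop}--\eqref{eq:RKHSderivprop2} at the order needed to evaluate the conormal derivative on $\partial\cD$. Everything else is a mechanical transcription of the Dirichlet proof with~\eqref{eq:sobolev_id_2} substituted for~\eqref{eq:sobolev_id_1}.
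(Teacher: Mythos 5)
Your proposal is correct and follows essentially the same route as the paper, whose proof of this theorem is literally ``the same as in the previous [Dirichlet] theorem'' with~\eqref{eq:sobolev_id_2} in place of~\eqref{eq:sobolev_id_1}, plus the single observation you also make: via $\partial^\alpha Q^{1/2}e_j(x)=\inpro[H_q]{Q^{1/2}e_j}{\partial_1^\alpha q(x,\cdot)}$ on $\bar{\cD}$, each $Q^{1/2}e_j$ inherits the conormal boundary condition~\eqref{eq:QNeumann} from $q$, which is well-defined since $q\in\cC^{1,1}(\bar{\cD}\times\bar{\cD})$. Your exponent bookkeeping (ranges (i)--(iii) lying in $[0,3/2)$ where no boundary condition or excluded exponent arises, exclusion of $r=3/2$ in (iv), and the integrability condition $r<(3+\sigma)/2$) matches the paper exactly.
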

	\begin{proof}
		The proof is the same as in the previous theorem. We simply note that for the last case, since $q \in \cC^{1,1}(\bar{\cD}\times\bar{\cD})$, the boundary condition~\eqref{eq:QNeumann} is well-defined. Moreover, since 
		\begin{equation*}
		\partial^\alpha Q^{\frac{1}{2}} e_j(x) = \inpro[H_q]{Q^{\frac{1}{2}} e_j}{\partial_1^\alpha q(x,\cdot)} 
		\end{equation*}
		for all $x \in \bar{\cD}$, $Q^{{1/2}} e_j$ inherits the boundary conditions of $q$.
	\end{proof}
	
	In the next two theorems we derive conditions on the kernel $q$ that guarantee that the estimate~\eqref{eq:Qreg} is satisfied for appropriate powers $r,s \ge 0$. We recall that it is given by
	\begin{equation*}
	\norm{\Lambda^{\frac{r}{2}}Q\Lambda^{\frac{s}{2}}}{\cL_2(H)} = \norm{\Lambda^{\frac{s}{2}}Q\Lambda^{\frac{r}{2}}}{\cL_2(H)} < \infty.
	\end{equation*}
	In this case we do not rely on the reproducing kernel property but can use more elementary techniques. 
	\begin{theorem}
		\label{prop:Qreg}
		Let $\Lambda$ be the elliptic operator associated with the bilinear form $\lambda$ with Dirichlet boundary conditions. Then, the operator $Q$ with kernel $q \in \cC(\bar{\cD}\times\bar{\cD})$ satisfies\eqref{eq:Qreg}
		\begin{enumerate}[label=(\roman*)]
			\item \label{thm:nonhomhsdirichlet:i} for all $r,s \in [0,1/2)$ such that $r+s < \sigma$ if there exists $\sigma \in (0,1]$ such that for a.e.\ $z \in \cD$, $q(z,\cdot) \in C^{0,\sigma}(\bar{\cD})$ with $\esssup_{z \in \cD} \norm{q(z,\cdot)}{C^{0,\sigma}(\bar{\cD})} < \infty$.
		\end{enumerate}
		If $q$ also satisfies the boundary conditions~\eqref{eq:QDirichlet}, then $Q$ satisfies~\eqref{eq:Qreg}
		\begin{enumerate}[label=(\roman*),resume]
			\item \label{thm:nonhomhsdirichlet:ii} for all $r,s \in [0,2] \setminus \{1/2,3/2\}$ such that $r+s < k + \sigma$ if there exist $k \ge 0$ and $\sigma \in (0,1]$ such that for a.e.\ $z \in \cD$, $q(z,\cdot) \in C^{k,\sigma}(\bar{\cD})$ with $\esssup_{z \in \cD} \norm{q(z,\cdot)}{C^{k,\sigma}(\bar{\cD})} < \infty$.
		\end{enumerate}
		If $\sigma = 1$ in~\ref{thm:nonhomhsdirichlet:ii} the statement is true with $r+s \le k + 1$.
	\end{theorem}
	\begin{proof}
		Since $q$ is symmetric, we have $\esssup_{z \in \cD} \norm{q(z,\cdot)}{C^{k,\sigma}(\bar{\cD})} = \esssup_{z \in \cD} \norm{q(\cdot,z)}{C^{k,\sigma}(\bar{\cD})}$  for $k, \sigma \ge 0$. Moreover, because of~\eqref{eq:sobolev_id_1} and Lemmata~\ref{lem:symmetricschatten} and~\ref{lem:symmetricschatten2}, it suffices to show that if there exist $k \ge 0$ and $\sigma \in (0,1]$ such that for a.e.\ $z \in \cD$, $q(z,\cdot) \in C^{k,\sigma}(\bar{\cD})$ with $\esssup_{z \in \cD} \norm{q(z,\cdot)}{C^{k,\sigma}(\bar{\cD})} < \infty$, then $Q \in \cL_2(H,H^r)$ for all $r < k + \sigma$, and for all $r \le k + 1$ when $\sigma = 1$. Under this condition, we have by definition of the weak derivative that
		\begin{equation*}
		D^\alpha \int_{\cD} q(\cdot,y) e_j(y) \dd y = \int_{\cD} D^\alpha_1 q(\cdot,y) e_j(y) \dd y = \int_{\cD} \partial^\alpha_1 q(\cdot,y) e_j(y) \dd y
		\end{equation*}
		for $|\alpha| \le k$, where $(e_j)_{j=1}^\infty$ is an orthonormal basis of~$H$. This is also true when $q(z,\cdot) \in C^{k-1,1}(\bar{\cD})$, $k \ge 1$, with $\esssup_{z \in \cD} \norm{q(z,\cdot)}{C^{k-1,1}(\bar{\cD})} < \infty$, since then $\partial^{\alpha}_1 q(\cdot,y)$, $|\alpha| \le k-1$, has a bounded classical derivative almost everywhere in~$\cD$. Under either of these conditions, we obtain from the definition of the Hilbert--Schmidt norm that
		\begin{align*}
		\norm{Q}{\cL_2(H,H^k)}^2 = \sum_{j=1}^{\infty} \norm{Qe_j}{H^k}^2 &= \sum_{j=1}^{\infty} \sum_{|\alpha| \le k}\Bignorm{ D^\alpha  \int_{\cD} q(\cdot,y) e_j(y) \dd y}{H}^2 \\ &= \sum_{|\alpha| \le k} \int_{\cD} \sum_{j=1}^{\infty} \left| \int_{\cD} \partial^\alpha_1 q(x,y) e_j(y) \dd y \right|^2 \dd x \\ &= \sum_{|\alpha| \le k} \int_{\cD} \norm{\partial^\alpha_1 q(x,\cdot)}{H}^2 \dd x \\
		&= \sum_{|\alpha| \le k} \int_{\cD} \int_{\cD} |\partial^\alpha_1 q(x,y)|^2 \dd x \dd y  < \infty,
		\end{align*}
		which finishes the proof of the very last statement of the theorem. Next we have, for $k < r < k + \sigma$,
		\begin{align*}
		\norm{Q}{\cL_2(H,H^r)}^2 
		&= \sum_{j = 1}^{\infty} \norm{Q e_j}{H^{k}}^2 + \sum_{|\alpha| = k} \int_{\cD \times \cD} \frac{\sum_{j = 1}^{\infty} |D^\alpha Qe_j(x)-D^\alpha Qe_j(y)|^2}{|x-y|^{d+2(r-k)}} \dd x \dd y \\ &= \norm{Q}{\cL_2(H,H^k)}^2 + \sum_{|\alpha| = k} \int_{\cD \times \cD} \frac{\sum_{j = 1}^{\infty} \left|\inpro[H]{\partial_1^\alpha q(x,\cdot) - \partial_1^\alpha q(y,\cdot) }{e_j}\right|^2 }{|x-y|^{d+2(r-k)}} \dd x \dd y \\
		&= \norm{Q}{\cL_2(H,H^k)}^2 + \sum_{|\alpha| = k}  \int_{\cD \times \cD} \frac{ \norm{\partial_1^\alpha q(x,\cdot) - \partial_1^\alpha q(y,\cdot)}{H}^2 }{|x-y|^{d+2(r-k)}} \dd x \dd y \\  
		&\lesssim \norm{Q}{\cL_2(H,H^k)}^2 + \sum_{|\alpha| = k} \int_{\cD \times \cD} \frac{ \esssup_{z \in \cD} \norm{q(z,\cdot)}{C^{k,\sigma}(\bar{\cD})}^2 }{|x-y|^{d+2(r-\sigma-k)}} \dd x \dd y < \infty,
		\end{align*}
		which completes the proof.
	\end{proof}
	
	\begin{remark}
		\label{rem:bridge-2}
		This result is not sharp for the example of Remark~\ref{rem:bridge-1}. From the explicit representation of the eigenpairs of $Q$, it follows that~\eqref{eq:Qreg} is fulfilled for all $r,s \in [0,3/2)$ such that $r+s < 3/2$. However, Theorem~\ref{prop:Qreg} only guarantees~\eqref{eq:Qreg} to hold for $r,s \in [0,1]$ such that $r+s \le 1$. The proof could in this case be amended to recover the sharp result by a more involved analysis of the term $$\int_{\cD \times \cD} \frac{ \norm{\partial_1 q(x,\cdot) - \partial_1 q(y,\cdot)}{H}^2 }{|x-y|^{1+2(r-k)}} \dd x \dd y,$$ with $k=1$, since in this case the (discontinuous) function $\partial_1 q(x,\cdot)$ is explicitly known. However, as we only consider H\"older conditions in this section, we do not pursue this direction further. 
	\end{remark}
	
	The following theorem for Neumann boundary conditions can be proven in the same way.
	
	\begin{theorem}
		\label{thm:Qnonhomhsneumann}
		Let $\Lambda$ be the elliptic operator associated with the bilinear form $\lambda$ with Neumann boundary conditions. Then, the operator $Q$ with kernel $q \in \cC(\bar{\cD}\times\bar{\cD})$ satisfies~\eqref{eq:Qreg}
		\begin{enumerate}[label=(\roman*)]
			\item %
			for all $r,s \in [0,1]$ such that $r+s < \sigma$ if there exists $\sigma \in (0,1]$ such that for a.e.\ $z \in \cD$, $q(z,\cdot) \in C^{0,\sigma}(\bar{\cD})$ with $\esssup_{z \in \cD} \norm{q(z,\cdot)}{C^{0,\sigma}(\bar{\cD})} < \infty$,		
			\item \label{thm:nonhomhsneumann:ii}
			for all $r,s \in [0,3/2)$ such that $r+s < 1+\sigma$
			if there exists $\sigma \in (0,1]$ such that for a.e.\ $z \in \cD$, $q(z,\cdot) \in C^{1,\sigma}(\bar{\cD})$ with $\esssup_{z \in \cD} \norm{q(z,\cdot)}{C^{1,\sigma}(\bar{\cD})} < \infty$.
		\end{enumerate}
		If $q$ also satisfies the boundary conditions~\eqref{eq:QNeumann}, then $Q$ satisfies~\eqref{eq:Qreg}
		\begin{enumerate}[label=(\roman*),resume]
			\item %
			for all $r,s \in [0,2]\setminus\{3/2\}$ such that $r+s < k+\sigma$ if there exist $k \ge 0$ and $\sigma \in (0,1]$ such that for a.e.\ $z \in \cD$, $q(z,\cdot) \in C^{k,\sigma}(\bar{\cD})$ with $\esssup_{z \in \cD} \norm{q(z,\cdot)}{C^{k,\sigma}(\bar{\cD})} < \infty$.
		\end{enumerate}
		Moreover, if $\sigma = 1$ above, the statements remain true with $r + s \le 1$, $r + s \le 2$ and $r + s \le k + 1$, respectively.
	\end{theorem}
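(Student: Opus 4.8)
The plan is to follow the proof of Theorem~\ref{prop:Qreg} line by line, replacing the Dirichlet Sobolev identity~\eqref{eq:sobolev_id_1} by its Neumann counterpart~\eqref{eq:sobolev_id_2} throughout. Using the symmetry of $q$ I would first record that $\esssup_{z\in\cD}\norm{q(z,\cdot)}{C^{k,\sigma}(\bar{\cD})} = \esssup_{z\in\cD}\norm{q(\cdot,z)}{C^{k,\sigma}(\bar{\cD})}$ for $k=0,1$, and that, by the symmetry of $Q$, the quantity $\norm{\Lambda^{r/2} Q \Lambda^{s/2}}{\cL_2(H)}$ is unchanged under swapping $r$ and $s$, so I may assume $r\ge s$. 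Writing $\rho = r+s$, the target is $\norm{\Lambda^{r/2} Q \Lambda^{s/2}}{\cL_2(H)} = \norm{Q}{\cL_2(\dot H^{-s}, \dot H^r)}$, and I would convert the two exponents separately using~\eqref{eq:sobolev_id_2}: since $r\ge s$ and $\rho<2$, the domain exponent satisfies $s<3/2$, so $\dot H^{-s}=H^{-s}$ holds unconditionally with norm equivalence, while on the range side $\dot H^r=H^r$ holds unconditionally for $r<3/2$ (cases~(i)--(ii)) and, for $r\in(3/2,2]$, under the boundary condition~\eqref{eq:QNeumann} (case~(iii)); the value $3/2$ is excluded because~\eqref{eq:sobolev_id_2} supplies no characterisation at the critical trace exponent. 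After these conversions the target is comparable to $\norm{Q}{\cL_2(H^{-s}, H^r)}$, which by Lemma~\ref{lem:symmetricschatten2} is finite as soon as the purely Sobolev estimate $\norm{Q}{\cL_2(H, H^{\rho})} < \infty$ holds. The essential difference from the Dirichlet case is that~\eqref{eq:sobolev_id_2} makes the conversion unconditional all the way up to $3/2$, which is precisely why the admissible individual exponents here are $[0,1]$, $[0,3/2)$ and $[0,2]\setminus\{3/2\}$ instead of being confined below $1/2$.

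For the core estimate, set $\rho = r+s$ and expand $\norm{Q}{\cL_2(H,H^\rho)}^2$ in an arbitrary orthonormal basis $(e_k)_{k=1}^\infty$ of $H$ via the Sobolev--Slobodeckij norm, exactly as in Theorem~\ref{prop:Qreg}. Carrying out the sum over $k$ by Parseval collapses the eigenfunction series and leaves, in case~(i),
\begin{equation*}
\norm{Q}{\cL_2(H,H^\rho)}^2 = \norm{Q}{\cL_2(H)}^2 + \int_{\cD\times\cD} \frac{\norm{q(x,\cdot)-q(y,\cdot)}{H}^2}{|x-y|^{d+2\rho}}\dd x\dd y,
\end{equation*}
where $\norm{Q}{\cL_2(H)}\le\norm{Q}{\cL_1(H)}<\infty$ since $Q$ is of trace class. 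The $C^{0,\sigma}$ hypothesis bounds the numerator by $C|x-y|^{2\sigma}$, so the integral converges precisely when $\rho<\sigma$. For cases~(ii)--(iii) I would differentiate under the integral sign, using that $C^{1,\sigma}$ regularity makes $\partial_1^\alpha q(\cdot,y)$ a bounded classical, hence weak, derivative and gives $D^\alpha Q e_k = \int_\cD \partial_1^\alpha q(\cdot,y)\, e_k(y)\dd y$ for $|\alpha|=1$. The $H^1$-part is finite by the same Parseval collapse, while the order-$(\rho-1)$ Slobodeckij seminorm of $\partial^\alpha Q e_k$ is controlled by the $C^{0,\sigma}$ modulus of $\partial_1^\alpha q$, yielding convergence for $\rho-1<\sigma$, i.e.\ $\rho<1+\sigma$.

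The step demanding the most care is case~(iii), where the range exponent $r$ exceeds $3/2$ and the clean identity $\dot H^t=H^t$ fails. Here the boundary condition~\eqref{eq:QNeumann} is essential: differentiating $Q e_k(x) = \int_\cD q(x,y) e_k(y)\dd y$ under the integral gives
\begin{equation*}
\sum_{i,j=1}^d n_i a_{i,j}\, \partial_{x_j} Q e_k = \int_\cD \Big( \sum_{i,j=1}^d n_i a_{i,j}\, \partial_{x_j} q(\cdot,y) \Big) e_k(y)\dd y = 0 \quad \text{on } \partial\cD,
\end{equation*}
so each $Q e_k$ inherits the conormal boundary condition of $q$, and the norm equivalence of~\eqref{eq:sobolev_id_2} on $\{u\in H^r:\partial u/\partial\nu_\Lambda=0\}$ applies to it, legitimising the conversion $\dot H^r\to H^r$ used in the reduction. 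Finally, when $\sigma=1$ the strict inequalities $\rho<\sigma$ and $\rho<1+\sigma$ may be upgraded to $r+s\le1$ and $r+s\le2$ by the borderline device already used in Theorem~\ref{prop:Qreg}: Lipschitz continuity of $q$, respectively of $\partial_1^\alpha q$, furnishes an essentially bounded classical derivative of one further order, placing $Q e_k$ directly in $H^1$, respectively $H^2$, and bypassing the now-divergent Slobodeckij integral. Since every estimate above is identical in form to its Dirichlet analogue, no further argument is needed.
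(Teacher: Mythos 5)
Your proposal is correct and follows essentially the same route as the paper, which proves this theorem by repeating the argument of Theorem~\ref{prop:Qreg} verbatim with the Neumann identity~\eqref{eq:sobolev_id_2} in place of~\eqref{eq:sobolev_id_1}: the reduction via Lemmas~\ref{lem:symmetricschatten} and~\ref{lem:symmetricschatten2} to the single estimate $Q \in \cL_2(H,H^{r+s})$, the Parseval collapse in the Sobolev--Slobodeckij norm, differentiation under the integral, and the Lipschitz device for the $\sigma = 1$ border cases are all identical. Your explicit verification that $Q e_k$ inherits the conormal boundary condition~\eqref{eq:QNeumann} when $r \in (3/2,2]$, which legitimises the conversion between $\dot{H}^r$ and $H^r$, is precisely the one observation the paper singles out (in the companion Neumann trace theorem) as the only point requiring comment beyond the Dirichlet proof.
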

	
	\section{Regularity of integral operators in the homogeneous case}
	\label{sec:hom}
	
	We now move on to the case of a homogeneous (or stationary) kernel, i.e., when $q$ is taken to be a function defined on the unbounded space $\R^d \times \R^d$ of the form $q(x,y) = q(x-y)$ for $x,y \in \R^d$. We now assume that $q$ is positive definite as opposed to just positive semidefinite, and a member of $\cC(\R^d) \cap L^1(\R^d)$, i.e., it is continuous, bounded and integrable on $\R^d$. Then, it has a positive Fourier transform  $\hat{q} \colon \R^d \to \R^+$ which is also integrable on $\R^d$, see~\cite[Chapter~6]{W04}. We use this property to derive a regularity result for $Q$ in a general Schatten class, starting with the following lemma. 
	
	\begin{lemma}
		\label{lem:Q-sobolev}
		If there are constants $C > 0$, $\sigma > d/2$ such that 
		\begin{equation*}
		\hat{q}(\xi) \le C \left(1 + |\xi|^2 \right)^{-\sigma}
		\end{equation*}
		for all $\xi \in \R^d$,  then the operator $Q$ with kernel $q \in \cC(\bar{\cD}\times\bar{\cD})$ satisfies $Q \in \cL(H,H^{2\sigma})$.
	\end{lemma}
	\begin{proof}
		Let $v \in H = L^2(\cD)$. The function $\cD \ni x \mapsto Q v(x)$ can be extended to $\R^d$ by 
		\begin{equation*}
		Q v(x) = \int_{\cD} q(x-y) v(y) \dd y = \int_{\R^d} q(x-y) v(y) \chi_{\cD}(y) \dd y = \left(q \ast (v \chi_{\cD})\right) (x),
		\end{equation*}
		where $\chi_{\cD}(x) = 1$ for $x \in \cD$ and $0$ elsewhere.
		Since $q \in L^1(\R)$ and $v\chi_{\cD} \in L^2(\R)$, $q \ast (v \chi_{\cD}) \in L^2(\R)$, so that $\reallywidehat{q \ast (v \chi_{\cD})}$ is well-defined and \eqref{eq:fouriernormD}~implies that
		\begin{align*}
		\norm{Q v}{H^{2\sigma}(\cD)}^2 \le \norm{q \ast (v\chi_{\cD})}{H^{2\sigma}(\R^d)}^2  &= \frac{1}{(2\pi)^{\frac{d}{2}}}\int_{\R^d} |\reallywidehat{q \ast (v \chi_{\cD})}(\xi)|^2 (1 + |\xi|^2)^{2\sigma} \dd \xi \\
		&= \frac{1}{(2\pi)^{\frac{d}{2}}}\int_{\R^d} |\widehat{v \chi_{\cD}}(\xi)|^2 \hat{q}(\xi)^2 (1 + |\xi|^2)^{2\sigma} \dd \xi \\
		&\lesssim \frac{1}{(2\pi)^{\frac{d}{2}}}\int_{\R^d} |\widehat{v \chi_{\cD}}(\xi)|^2 \dd \xi = \norm{v}{H}^2,
		\end{align*}
		where we made use of Plancherel's theorem. 
	\end{proof}
	
	This lemma allows us to deduce a regularity result on $Q$, similar to Theorems~\ref{prop:Qreg} and~\ref{thm:Qnonhomhsneumann}. However, instead of just considering the estimate~\eqref{eq:Qreg}, we deduce conditions on $q$ for which the general Schatten norm condition $
	\norm{\Lambda^{r/2} Q \Lambda^{s/2}}{\cL_p(H)} < \infty, $
	with $p \in [1,\infty)$, is satisfied. Since $q$ is defined on all of $\R^d$, we cannot expect $q$ to satisfy any boundary condition in the sense of~\eqref{eq:QDirichlet} or~\eqref{eq:QNeumann}, cf.\ \cite{B05}, which, in light of~\eqref{eq:sobolev_id_1} and~\eqref{eq:sobolev_id_2}, explains the restrictive range on $r$ and $s$ below.
	
	\begin{theorem}
		\label{thm:Q-homog-schatten}
		Under the same conditions as in~Lemma~\ref{lem:Q-sobolev}, the operator $Q$ with kernel $q \in \cC(\bar{\cD}\times\bar{\cD})$ satisfies
		\begin{equation*}
		\norm{\Lambda^{\frac{r}{2}} Q \Lambda^{\frac{s}{2}}}{\cL_p(H)} < \infty
		\end{equation*}
		\begin{enumerate}[label=(\roman*)]
			\item %
			for all $r,s \in [0, 1/2)$ such that $r+s < 2\sigma-d/p$ if $\Lambda$ has Dirichlet boundary conditions and 
			\item %
			for all $r,s \in [0, 3/2)$ such that $r+s < 2\sigma-d/p$ if $\Lambda$ has Neumann boundary conditions. 
		\end{enumerate}
	\end{theorem}
	\begin{proof}
		Using Lemmata~\ref{lem:sobolevembedding} and~\ref{lem:Q-sobolev} along with~\eqref{eq:schatten-ideal}, we obtain 
		\begin{equation*}
		\norm{Q}{\cL_p(H,H^r)} < \norm{I_{H^{2\sigma} \hookrightarrow H^r}}{ \cL_p(H^{2\sigma},H^r) } \norm{Q}{\cL(H,H^{2\sigma})} < \infty
		\end{equation*}
		for $2 \sigma - r > d/p$. Lemma~\ref{lem:symmetricschatten2} along with~\eqref{eq:sobolev_id_1} and~\eqref{eq:sobolev_id_2} now complete the proof.
	\end{proof}
	
	\begin{remark}
		\label{rem:matern}
		Examples of kernels $q$ covered by the results above include the class of Mat\'ern covariance kernels \cite[Example~7.17]{LPS14}. The exponential kernel is a special case. It is given by $q(x,y) = \exp(-|x-y|)$ for $x, y \in \R$. Its Fourier transform $\hat{q}$ is, for a constant $C>0$, given by $\hat{q}(\xi) = C \left(1 + |\xi|^2 \right)^{-(d+1)/2}$. Another example of a kernel covered by the results is the Gaussian kernel  $q(x,y) = \exp(-|x-y|^2)$ with Fourier transform given by $\hat{q}(\xi) = C \exp(-\xi^2/4)$.
	\end{remark}
	
	As a special case of this theorem, we obtain conditions on $q$ that ensure the condition~\eqref{eq:tracecond} to be satisfied. We recall that this condition is given by
	\begin{equation*}
	\norm{\Lambda^{\frac{r}{2}}Q\Lambda^{\frac{r}{2}}}{\cL_1(H)} = \norm{\Lambda^{\frac{r}{2}}Q^{\frac{1}{2}}}{\cL_2(H)}^2 < \infty,
	\end{equation*} 
	with $r \ge 0$. 
	
	\begin{corollary}
		\label{cor:homtrace}
		Under the same conditions as in~Lemma~\ref{lem:Q-sobolev}, the operator $Q$ with kernel $q \in \cC(\bar{\cD}\times\bar{\cD})$ satisfies~\eqref{eq:tracecond}
		\begin{enumerate}[label=(\roman*)]
			\item for all $r \in [0, \min(\sigma - d/2,1/2))$ if $\Lambda$ has Dirichlet boundary conditions and 
			\item for all $r \in [0, \min(\sigma - d/2,3/2))$ if $\Lambda$ has Neumann boundary conditions. 
		\end{enumerate}
	\end{corollary}
	
	Recall from Section~\ref{sec:intro} that~\eqref{eq:tracecond} being satisfied is equivalent to requiring that the $H$-valued Gaussian random variable $W(t)$ with covariance $tQ$ takes values in the space $\dot{H}^r$. As such, given the ranges of $r$ and $s$ above, Corollary~\ref{cor:homtrace} can be seen as a statement on the spatial regularity (as measured in Sobolev norms) of $W(t)$ when this is regarded as a (generalized) random field in $\cD$ with stationary covariance kernel $tq$. The deduction of such regularity properties of stationary processes (i.e., when $\cD \subset \R$) based on the properties of $\hat{q}$ has a long tradition, see, e.g., \cite{CL67}. Results that deal with stationary fields on general domains with Lipschitz boundary are harder to find. One exception is~\cite{S19} which implicitly contains the Sobolev regularity result of the corollary, albeit for $\sigma \in \N$. This can be seen from the fact that under the conditions of Lemma~\ref{lem:Q-sobolev}, $Q^{1/2}(H) = H_q(\bar{\cD}) \hookrightarrow H^\sigma(\cD)$ (cf.\ \cite[Corollary~10.48]{W04}).
	
	\begin{remark}
		The result of Corollary~\ref{cor:homtrace} is sharp. Consider the setting that $d=1$ with $\cD = (0,1)$, $\Lambda = (-\Delta)$ with Neumann boundary conditions and let $q(x,y) = \exp(-|x-y|)$ for $x,y \in \cD$. Then $\sigma = 1$ and by the result above, $\norm{\Lambda^{\frac{r}{2}}Q^{\frac{1}{2}}}{\cL_2(H)} < \infty$ for all $r \in [0,1/2)$.  By \cite[Corollary~10.48]{W04}, we have $H^1 = H_q(\bar{\cD})$ with equivalent norms. Since the condition~\eqref{eq:tracecond} is equivalent to $I_{H_q(\cD) \hookrightarrow \dot{H}^r} \in \cL_2(H_q(\cD),\dot{H}^r)$, we see by~\eqref{eq:sobolev_id_2} and Lemma~\ref{lem:sobolevembedding} that $\norm{\Lambda^{\frac{r}{2}}Q^{\frac{1}{2}}}{\cL_2(H)} = \infty$ for $r \ge 1/2$.
	\end{remark}
	
	\section{Applications to SPDE approximations}
	\label{sec:spde}
	
	In this section, we reconnect to the discussion in Section~\ref{sec:intro} and highlight applications of the estimates obtained in Sections~\ref{sec:non-hom} and~\ref{sec:hom} to the numerical approximation of SPDEs on bounded domains. We list a few examples from the literature where these estimates are used as assumptions and discuss how they are used and how this relates to our results. Even though our focus is on the numerical approximation of SPDEs, the estimates we have obtained have implications also for SPDEs themselves, as seen in Section~\ref{sec:intro}. These are not restricted to stochastic reaction-diffusion equations but include other SPDEs where an elliptic operator is involved, such as stochastic wave equations and stochastic Volterra equations on bounded domains. As an example of the latter, it can be seen that a bound of type~\eqref{eq:tracecond} implies a certain regularity of the solution~\cite[Proposition~2.1]{KP14b}. 
	
	All the examples below are considered on some bounded convex domain $\cD \subset \R^d$, $d=1,2,3$.
	
	\begin{example}[Approximation of the stochastic heat equation]
		\label{ex:stochastic-heat}
		One of the most studied SPDEs from a numerical perspective is the stochastic heat equation with additive noise, given by
		\begin{equation*}
		\dd X(t) = \Delta X(t) \dd t + \dd W(t)
		\end{equation*}
		for $t \in (0,T]$, a sufficiently smooth initial value $X(0) = x \in H$ and $W$ a $Q$-Wiener process in $H = L^2(\cD)$. This can be seen as a simplified version of equations considered for the modeling of sea surface temperature and other geophysical spatio-temporal processes \cite{HH87}.
		
		In \cite{KLL12}, Dirichlet zero boundary conditions are assumed for the negative Laplacian $\Lambda=-\Delta$. Under the condition
		\begin{equation*}
		\norm{\Lambda^{\frac{r}{2}}Q}{\cL_1(H)} < \infty,
		\end{equation*}
		it is shown in \cite[Theorem~4.2]{KLL12} that a spatially semidiscrete finite element approximation $X_h$ converges weakly to $X$ in the sense that, for a smooth test functional $\phi$ on $H$,
		\begin{equation*}
		\left|\E\left[\phi(X_h(T))-\phi(X(T))\right]\right| \le C h^{2+r} |\log(h)|
		\end{equation*}
		for some constant $C > 0$ independent of $h > 0$. Here $h$ is the maximal mesh size of the finite element mesh. The range for the parameter $r$ is taken to be $[-1,\delta-1]$, where $\delta$ is the degree of the piecewise polynomials making up the finite element space in which $X_h$ is computed. 
		
		Suppose that $Q$ is an integral operator with a homogeneous kernel $q$ satisfying the conditions of Theorem~\ref{thm:Q-homog-schatten} for some $\sigma > d/2$. Suppose further that $\delta > 1$. Then, this theorem implies that $X_h$ converges weakly to $X$, essentially with rate $2+\min(2\sigma-d,1/2)$. If we did not have access to this result, we would incorrectly assume the convergence rate to be~$2$. Similar remarks hold, with potentially more dramatic differences in rates, in the non-homogeneous setting of~Theorem~\ref{thm:nonhomtracedirichlet}. The importance of sharp weak convergence rates in the application of the multilevel Monte Carlo methods for SPDE simulation has been pointed out in~\cite{L16}. 
		
		In the setting above, we required that the piecewise polynomials making up the finite element space were of degree $\delta > 1$ to see a difference in the rate for the weak error. For the strong error, on the other hand, we see an improvement in rates also when $\delta = 1$. Specifically, for the same $q$ as above, Corollary~\ref{cor:homtrace} yields that~\eqref{eq:tracecond} is fulfilled for all $r < \sigma - d/2$. As noted in~\cite{KLL12}, we have for all such $r$ the existence of a constant $C>0$ such that 
		\begin{equation*}
		\norm{X(T)-X_h(T)}{L^2(\Omega,H)} = \E\left[\norm{X(T)-X_h(T)}{H}^2\right]^{\frac{1}{2}} \le C h^{\min(r+1,2)}.
		\end{equation*}
		If we did not have access to Corollary~\ref{cor:homtrace} and only knew that $\trace(Q) < \infty$, we might incorrectly conclude that the convergence rate was $1$.
		
		Here we only mentioned the results for Dirichlet boundary conditions since these are the most frequently encountered for approximations of the stochastic heat equation. Similar remarks hold for Neumann boundary conditions, we refer to~\cite{FS91} for details on when the error estimates used in the analysis of~\cite{KLL12} hold for non-Dirichlet boundary conditions.
	\end{example}
	
	\begin{example}[Approximation of the stochastic Allen--Cahn equation]
		The stochastic Allen--Cahn equation is a non-linear version of the stochastic heat equation, given by
		\begin{equation}
		\label{eq:stochastic-allen-cahn}
		\dd X(t) = \left(\Delta X(t) + F(X(t))\right) \dd t + \dd W(t),
		\end{equation}
		in the same setting as in Example~\ref{ex:stochastic-heat}. The operator $F$ on $H$ is non-linear and given by $F(u)(x) = u(x)-u(x)^3$ for $x \in \cD$. In \cite{QW19}, a fully discrete approximation $X_{h,\Delta t}$ of $X$ is considered, based on a piecewise linear finite element discretization in space combined with a fully implicit backward Euler approximation in time. 
		
		Estimate~\eqref{eq:tracecond} with $r > 0$ is needed to even establish existence of a solution to~\eqref{eq:stochastic-allen-cahn} in~\cite{QW19} when the spatial dimension $d=3$. Moreover, also for dimensions $d=1,2$, the estimate is necessary to find optimal convergence rates of $X_{h,\Delta t}$. Specifically, under~\eqref{eq:tracecond}, \cite[Theorem~4.1]{QW19} yields the existence of a constant $C>0$ such that for all mesh sizes $h > 0$, time steps $0 < \Delta t < 1/3$ and time points $t_n = n \Delta t $ with $n \in \N$,
		\begin{equation*}
		\norm{X(t_n)-X_{h,\Delta t}(t_n)}{L^2(\Omega,H)} = \E\left[\norm{X(t_n)-X_{h,\Delta t}(t_n)}{H}^2\right]^{\frac{1}{2}} \le C \left(h^{r+1} + {\Delta t}^{\frac{r+1}{2}} \right).
		\end{equation*}
		Here $r$ is taken in the range $[-2/3,1]$. Combining our results with this, we see, for example, that if $Q$ is an integral operator with a kernel $q \in \cC^{1,1}(\bar{\cD} \times \bar{\cD})$ that satisfy the Dirichlet boundary conditions,  Theorem~\ref{thm:nonhomtracedirichlet} yields a convergence rate of order $2$ in space and $1$ in time. If $q$ is a homogeneous kernel satisfying the conditions of Corollary~\ref{cor:homtrace} for some $\sigma > d/2$, we essentially obtain convergence rates $\min(1+\sigma-d/2,3/2)$ in space and $\min(1+\sigma-d/2,3/2)/2$ in time. Without these results, we would incorrectly assume a rate of $1$ in space and $1/2$ in time. Condition~\eqref{eq:tracecond} with $r > 0$ is commonly considered for the stochastic Allen--Cahn equation, for example in \cite{CH19,KLL18}.
	\end{example}
	
	\begin{example}[Approximation of the stochastic wave equation]
		The stochastic wave equation is another SPDE commonly encountered in the literature. It is considered as a simplified model for the movement of DNA strings suspended in liquid \cite{D09}. The authors of~\cite{CLS13} analyze numerical schemes for it in the same setting as above with $H = L^2(\cD)$. It is there given
		\begin{equation}
		\label{eq:stoch-wave}
		\dd \dot{X}(t) = \Delta X(t) \dd t + \dd W(t)
		\end{equation}
		for $t \in (0,T]$. Here $\dot X$ is the time derivative of $X$, the equation is posed with two smooth initial values $X(0)$, $\dot{X}(0)$ and $\Delta$ is equipped with Dirichlet zero boundary conditions. 
		
		In~\cite{CLS13}, discretizations $X_{h,\Delta t}, \dot{X}_{h, \Delta t}$ of $X$ and $\dot X$ are obtained by a piecewise linear finite element method in space and an exponential integrator method in time. Assuming that the estimate~\eqref{eq:tracecond} holds with $r \ge 0$, \cite[Theorem~4.3]{CLS13} yields a constant $C>0$ such that for all mesh sizes $h > 0$, time steps $\Delta t > 0$ and time points $t_n = n \Delta t $ with $n \in \N$,
		\begin{equation*}
		\norm{X(t_n)-X_{h,\Delta t}(t_n)}{L^2(\Omega,H)} \le C \left(h^{\frac{2(r+1)}{3}} + {\Delta t}^{\min(r+1,1)} \right).
		\end{equation*}
		The range for $r$ is taken to be $[-1,2]$. Hence, if $Q$ is an integral operator, Theorem~\ref{thm:nonhomtracedirichlet} and Corollary~\ref{cor:homtrace} can improve the spatial convergence rate from $2/3$ (if we only knew that~\eqref{eq:tracecond} held true with $r=0$) up to $2$, under the right conditions on $q$. Furthermore, the corresponding error result in \cite[Theorem~4.3]{CLS13} for the time derivative $\dot{X}_{h,\Delta t}$ approximation requires~\eqref{eq:tracecond} to hold  with $r > 0$ to yield any convergence rate at all. 
		
		Similar remarks hold for the results of \cite{W15}, where a temporally semidiscrete exponential integrator approximation is used, and for \cite{QW17}, where a fully discrete scheme, based also on the spectral Galerkin method, is applied to a damped stochastic wave equations.
	\end{example}
	
	\begin{example}[Approximation of SPDE covariance operators]
		Recently, the authors of this paper derived error bounds for approximations of the covariance operator of solutions to SPDEs \cite{KLP21a}, using a semigroup approach. For example, in \cite[Section~3.2]{KLP21a}, fully discrete approximations $K_{h, \Delta t}(t_n)$ of covariance operators $K(t_n) = \Cov(X(t_n))$ of the solution $X$ to (a variant of) the stochastic wave equation~\eqref{eq:stoch-wave} are considered. It is shown that for $p \in \{1,2\}$, there is a constant $C>0$ such that for all $h, \Delta t \in (0,1]$
		\begin{equation}
		\label{eq:covariance-convergence}
		\norm{K(t_n)-K_{h, \Delta t}(t_n)}{\cL_p(H)} \le C \left(h^{\min(\frac{2 r}{3},2)} + {\Delta t}^{\min(\frac{2r}{3},1)} \right).
		\end{equation}
		This applies when the discretizations used are a piecewise linear finite element method with mesh size $h$ in space along with a rational approximation of the underlying semigroup with time step $\Delta t$ in time. 
		
		The convergence in~\eqref{eq:covariance-convergence} is obtained under the assumption that
		\begin{equation}
		\label{eq:Q-cov-wave} 
		\norm{Q}{\cL_p(\dot{H}^{1},\dot{H}^{r-1})} = \norm{\Lambda^\frac{r-1}{2} Q \Lambda^{-\frac{1}{2}}}{\cL_p(H)} < \infty.
		\end{equation}
		Suppose, as in the numerical simulation in~\cite[Section~3.2]{KLP21a}, that $Q$ is an integral operator with a homogeneous kernel $q$ satisfying the conditions of Theorem~\ref{thm:Q-homog-schatten} for some $\sigma > d/2$.
		In the $\cL_1(H)$ case, we obtain, using~\eqref{eq:schatten-holder} and~\eqref{eq:schatten-ideal},
		\begin{align*}
		\norm{\Lambda^{\frac{r-1}{2}} Q \Lambda^{-\frac{1}{2}}}{\cL_1(H)} &\le \norm{\Lambda^{\frac{r-1}{2}} Q}{\cL_{p_1}(H)} \norm{\Lambda^{-\frac{1}{2}}}{\cL_{p_2}(H)} \\
		&= \norm{\Lambda^{\frac{r-1}{2}} Q}{\cL_{p_1}(H)} \norm{I_{\dot{H}^1 \hookrightarrow H}}{\cL_{p_2}(\dot{H}^1,H)} \\
		&\le \norm{\Lambda^{\frac{r-1}{2}} Q}{\cL_{p_1}(H)} \norm{I_{H^1 \hookrightarrow H}}{\cL_{p_2}(H^1,H)} \norm{I_{\dot{H}^1 \hookrightarrow H^1}}{\cL(\dot{H}^1,H^1)},
		\end{align*}
		where $1/{p_1} + 1/{p_2} = 1$. In light of Lemma~\ref{lem:sobolevembedding}, we should take ${p_2} > d$. By Theorem~\ref{thm:Q-homog-schatten}, the bound is then finite for $r-1 < \min(2 \sigma - d(1 - 1/{p_2}),1/2)$ under Dirichlet boundary conditions and for $r-1 < \min(2 \sigma - d(1 - 1/{p_2}),3/2)$ under Neumann boundary conditions. By letting ${p_2}$ tend to $d$ from above and noting that $\sigma > d/2$, we see that under the condition $\hat{q}(\xi) \le C \left(1 + |\xi|^2 \right)^{-\sigma}$, $\norm{\Lambda^{(r-1)/2} Q \Lambda^{-1/2}}{\cL_1(H)} < \infty$ for all $r < 3/2$ when Dirichlet boundary conditions are used and for all $r < \min(5/2, 2 \sigma - d + 2)$ when Neumann boundary conditions are used. By an analogous argument, we obtain, under the same condition on $\hat{q}$, that $\norm{\Lambda^{(r-1)/2} Q \Lambda^{-1/2}}{\cL_2(H)} < \infty$ for all $r < 3/2$ in the Dirichlet case. In the Neumann case, this quantity is finite for all $r < \min(5/2, 2 \sigma +1)$ when $d=1$ and for all $r < 5/2$ when $d \in \{2,3\}$.
		
		As in the examples above, the use of our estimates yields higher convergence rates compared to only knowing that $Q \in \cL_1(H)$. In this case, it is important to note that if we had only used estimates on $Q^{1/2}$ as in Corollary~\ref{cor:homtrace}, we would have obtained suboptimal rates. For example, consider the case that $d=2$ and that, for a kernel $q \in \cC(\bar{\cD} \times \bar{\cD})$, there exists $\sigma \in (0,1]$ such that for a.e.\ $z \in \cD$, $q(z,\cdot) \in C^{0,\sigma}(\bar{\cD})$ with $\esssup_{z \in \cD} \norm{q(z,\cdot)}{C^{0,\sigma}(\bar{\cD})} < \infty$. In the same way as before, we have 
		\begin{equation*}
		\norm{\Lambda^{\frac{r-1}{2}} Q \Lambda^{-\frac{1}{2}}}{\cL_1(H)} \le \norm{\Lambda^{\frac{r-1}{2}} Q \Lambda^{\frac{\epsilon}{2}}}{\cL_{2}(H)}
		\norm{I_{H^{1+\epsilon}\hookrightarrow H}}{\cL_{2}(H^{1+\epsilon},H)} \norm{I_{\dot{H}^{1+\epsilon}\hookrightarrow H^{1+\epsilon}}}{\cL(\dot{H}^{1+\epsilon},H^{1+\epsilon})}
		\end{equation*}
		for sufficiently small $\epsilon > 0$.
		In the case of Neumann boundary conditions, Theorem~\ref{thm:Qnonhomhsneumann} then yields that the quantity $\norm{\Lambda^{(r-1)/2} Q \Lambda^{-1/2}}{\cL_1(H)}$ is finite for $r < 1 +\sigma$. If we only used Theorem~\ref{thm:nonhomtraceneumann}, we would instead conclude that 
		\begin{equation*}
		\norm{\Lambda^{\frac{r-1}{2}} Q \Lambda^{-\frac{1}{2}}}{\cL_1(H)} \le \norm{\Lambda^{\frac{r-1}{2}} Q^{\frac{1}{2}}}{\cL_{2}(H)}^2 \norm{I_{\dot{H}^{r}\hookrightarrow H}}{\cL(\dot{H}^{r},H)} < \infty
		\end{equation*}   
		for $r < 1 + \sigma/2$.
		Similar remarks hold for the weak convergence analysis of approximations of hyperbolic SPDEs in~\cite{KLP20, KLL12, KLL13, W15}, where the estimate~\eqref{eq:Q-cov-wave} for $p = 1$ is also assumed. 
	\end{example}
	
	\bibliographystyle{hplain}
	\bibliography{hs-regularity}	
	
\end{document}